\newtheorem{R}{Remark}
\newtheorem{T}{Theorem}[section]
\newtheorem{Pro}{Proposition}
\newtheorem{C}{Corollary}
\newtheorem{Le}{Lemma}[section]
\newcommand{\ds}{\displaystyle}
\newcommand{\re}{\mathbb{R}}
\title{\textbf{Contributions to the study of Anosov Geodesic Flows in Non-Compact Manifolds}}
\author{\'Italo  Melo  and Sergio Roma\~na  }  
\date{}
\begin{document}
\maketitle

\begin{abstract}
In this paper we prove that if the geodesic flow of a {compact or non-compact} complete manifold without conjugate points is of the Anosov 
type, then the average of the integral of the sectional curvature along the geodesic is negative and away from zero from a uniform time. Moreover, in dimension two, if the manifold has no focal points, then this condition is sufficient to obtain that the geodesic flow is of
Anosov type. This sufficient condition will also be used to construct new examples of non-compact surfaces whose geodesic flow is of the Anosov type.
\end{abstract}
 \section{Introduction}

In \cite{Ano:69}, Anosov proved that geodesic flows of manifold of negative curvature provides chaotic dynamical systems, that is, the degree of complexity of the orbits is very high.  These geodesic flow  are called uniformly hyperbolic systems or simply ``Anosov" systems.  When the manifold is compact, these geodesic flow are stably ergodic, mixing and decay of correlations. When the manifold  is not compact, but its curvature is negative pinched (bounded between two negative constants),  the same argument by Anosov implies that the geodesic flow is also Anosov (cf. \cite{Kni:02}).  Some important questions:
\begin{enumerate}
 \item What condition in the curvature implies Anosov condition for the geodesic flow? 
 \item What geometric conditions impose the Anosov condition?
\end{enumerate}
The question 2 is very generic, because it may depend on which geometric invariant we are looking for.\\ 
\indent Some answers to the question 2, were obtained by many authors, by example, \cite{kli:74} showed, for compact manifold, the Anosov condition implies a geometry without conjugate points, this result was generalized by Ma\~n\'e in \cite{man:87} for manifold of finite volume. In \cite{Ruggiero1991}, Ruggiero showed that the $C^2$-interior of metrics  without conjugate points are precisely the Anosov metrics. \\
\indent The above results show the intimate relationship between geometry and dinamics of Anosov geodesic flows. \\
Concern to the question 1, for many time, it was thought that negative curvature was a necessary condition to the geodesic flow to be Anosov. But, in {\cite{Ebe:73}, Eberlein constructed examples of compact manifolds with non-positive curvature whose  geodesic flow is Anosov and that has open subset where the sectional curvature is zero on all tangent planes, in other words, negative curvature does not is a necessary condition for a geodesic flow to be Anosov.  Moreover, in 2003, Donnay and Pugh, (see \cite{DP:03}) constructed a example of a compact surface isometrically embedded in $\mathbb{R}^3$ whose geodesic flow is Anosov, in particular, this surface has points with positive curvature.\\
The above examples shown that all sign of the curvature can be appear and still the geodesic flow can be Anosov.\\
\indent The more general equivalences, concern the question 1, in compact manifold  or compactly homogeneous (the isometry group of its universal cover acts co-compactly) was given by  Eberlien  {\cite{Ebe:73}}. In this paper, he showed, in particular that, whenever the geodesic flow is Anosov, then  the negative sign of the curvature appears at some point throughout the geodesic.  For more general case, in \cite{Bol:79} Bolton, proved the same equivalences proved by Eberlein for non compact manifold. Both results does  not say nothing about the quantity of negative curvature that appear, when the geodesic flow is Anosov.\\
\indent This paper focuses on the treatment of the geometric conditions, compact of non-compcat manifold, imposed by the Anosov condition, \emph{i.e.}, we obtain an answer to the question 2. This answer is related to the intergral of curvature.  Moreover, related to the question 1,  we get a geometric condition that imply the condition of Anosov for non-compact surface. \\
To announce the results of this work, we begin with the formal definition of the Anosov geodesic flow.

\indent Let $(M, \langle, \, \rangle)$ be a complete Riemannian manifold and $SM$ the unitary tangent bundle. 	
Let $\phi^t:SM \rightarrow SM$ be the geodesic flow and suppose that $\phi^t$ is Anosov. This means that  $T(SM)$ have a 
splitting $T(SM) = E^s \oplus \langle G \rangle \oplus E^u $ such that 
\begin{eqnarray*}
	d\phi^t_{\theta} (E^s(\theta)) &=& E^s(\phi^t(\theta)),\\
	d\phi^t_{\theta} (E^u(\theta)) &=& E^u(\phi^t(\theta)),\\
	||d\phi^t_{\theta}\big{|}_{E^s}|| &\leq& C \lambda^{t},\\
	||d\phi^{-t}_{\theta}\big{|}_{E^u}|| &\leq& C \lambda^{t},\\
\end{eqnarray*}
for all $t\geq 0$ with $ C > 0$ and $0 < \lambda <1$, where  $G$ is the vector field derivative of the geodesic vector flow. 


\indent For any $\theta=(p,v)\in SM$, we will denoted by $\gamma_{\theta}(t)$ the unique geodesic with initial conditions
$\gamma_{\theta}(0)=p$ and $\gamma'_{\theta}(0)=v$. Let $V(t)$ be a nonzero unit and perpendicular  parallel vector field along 
$\gamma_{\theta}(t)$. We denote by $k(V(t))$ the sectional curvature of the subspace spanned by $\gamma'_{\theta}(t)$ and $ V(t)$, if $M$ is a surface then we will denote by $k(\gamma_{\theta}(s))$ the sectional curvature in the point $\gamma_{\theta}(s)$.\\

Now we present our first result,
\begin{T}\label{main}
Let $M$ be a complete manifold with curvature bounded below by $-c^2$ without conjugate points and whose geodesic flow is Anosov. Then, there are two positive constants $B$ and $t_0$  such that, for all $\theta \in SM$ and for any unit perpendicular parallel vector field $V(t)$ along  $\gamma_{\theta}(t)$ we have that 
\begin{equation}\label{E1main}
\displaystyle\frac{1}{t}\displaystyle\int_{0}^{t} k(V(r)) \ dr   \leq -B,
\end{equation}
whenever if $t > t_0$.
\end{T}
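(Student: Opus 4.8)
The plan is to read the statement through the matrix Riccati equation along a geodesic and to feed into it the uniform contraction and expansion of the Anosov splitting. First I recall the standard objects. Since $M$ has no conjugate points, along each geodesic $\gamma_\theta$ the stable and unstable Green solutions $\mathcal U^{s}(\theta,\cdot),\mathcal U^{u}(\theta,\cdot)$ of the matrix Riccati equation $\mathcal U'+\mathcal U^{2}+R(t)=0$ exist and are symmetric for all $t\in\re$, where $R(t)$ denotes the curvature operator acting on $\gamma_\theta'(t)^{\perp}$; their graphs are exactly the fibres $E^{s}(\phi^{t}\theta)$ and $E^{u}(\phi^{t}\theta)$ of the Anosov splitting restricted to perpendicular Jacobi fields. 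Because these are \emph{global} solutions on all of $\re$ and the hypothesis gives $R(t)\ge -c^{2}\,\mathrm{Id}$, the standard finite-time blow-up argument for the Riccati equation, applied to the largest and smallest eigenvalues, forces $\|\mathcal U^{s}(\theta,t)\|\le c$ and $\|\mathcal U^{u}(\theta,t)\|\le c$ for all $\theta\in SM$ and $t\in\re$.

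The crux — which I expect to be the real obstacle, because $SM$ is noncompact and so one cannot simply invoke continuity of the splitting over a compact base — is a \emph{uniform transversality}: there is $\delta>0$, depending only on $c$ and on the Anosov constants $C,\lambda$, such that $\mathcal U^{u}(\theta,t)-\mathcal U^{s}(\theta,t)\ge\delta\,\mathrm{Id}$ for all $\theta,t$. By flow-equivariance it is enough to prove it at $t=0$. Fix $\theta$, let $\mu_\theta>0$ be the least eigenvalue of $\mathcal U^{u}(\theta,0)-\mathcal U^{s}(\theta,0)$ and $w$ a unit eigenvector. The vertical vector $(0,w)$, which lies in $\langle G\rangle^{\perp}=E^{s}(\theta)\oplus E^{u}(\theta)$, decomposes as $(0,w)=\xi^{s}+\xi^{u}$ with $\xi^{s}=\big(-\tfrac1{\mu_\theta}w,\,-\tfrac1{\mu_\theta}\mathcal U^{s}(\theta,0)w\big)$ and $\xi^{u}=\big(\tfrac1{\mu_\theta}w,\,\tfrac1{\mu_\theta}\mathcal U^{u}(\theta,0)w\big)$, whence $\tfrac1{\mu_\theta}\le\|\xi^{s}\|,\|\xi^{u}\|\le\tfrac{\sqrt{1+c^{2}}}{\mu_\theta}$ by the bound $\|\mathcal U^{\bullet}\|\le c$. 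Using $\|d\phi^{-t}|_{E^{u}}\|\le C\lambda^{t}$ and $\|d\phi^{t}|_{E^{s}}\|\le C\lambda^{t}$ we get
\[
\|d\phi^{t}(0,w)\|\ \ge\ \|d\phi^{t}\xi^{u}\|-\|d\phi^{t}\xi^{s}\|\ \ge\ \frac1{\mu_\theta}\Big(\tfrac1C\lambda^{-t}-C\sqrt{1+c^{2}}\,\lambda^{t}\Big),
\]
so that for a fixed $t_{1}=t_{1}(C,\lambda,c)$ making the bracket $\ge1$ one has $\|d\phi^{t_{1}}(0,w)\|\ge1/\mu_\theta$. On the other hand $d\phi^{t_{1}}(0,w)=(J(t_{1}),J'(t_{1}))$, where $J$ is the perpendicular Jacobi field with $J(0)=0$, $J'(0)=w$, and Rauch comparison (curvature $\ge-c^{2}$) bounds $\|J(t_{1})\|$ and $\|J'(t_{1})\|$ by a constant $c_{2}=c_{2}(c,t_{1})$. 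Hence $1/\mu_\theta\le c_{2}$, i.e. $\mu_\theta\ge1/c_{2}=:\delta$, uniformly in $\theta$.

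Granting these two ingredients, the estimate follows by routine bookkeeping. Let $V(t)$ be a unit parallel perpendicular field along $\gamma_\theta$. Since $V$ is parallel, $\tfrac{d}{dt}\langle\mathcal U^{\bullet}V,V\rangle=\langle(\mathcal U^{\bullet})'V,V\rangle$, and the Riccati equation together with $\langle R(t)V(t),V(t)\rangle=k(V(t))$ gives, for $\bullet\in\{s,u\}$,
\[
k(V(t))=-\frac{d}{dt}\langle\mathcal U^{\bullet}(\theta,t)V(t),V(t)\rangle-\|\mathcal U^{\bullet}(\theta,t)V(t)\|^{2}.
\]
Averaging the identities for $s$ and $u$ and integrating over $[0,t]$,
\[
2\int_{0}^{t}k(V(r))\,dr=-\Big[\langle\mathcal U^{s}V,V\rangle+\langle\mathcal U^{u}V,V\rangle\Big]_{0}^{t}-\int_{0}^{t}\big(\|\mathcal U^{s}V\|^{2}+\|\mathcal U^{u}V\|^{2}\big)\,dr.
\]
By $\|\mathcal U^{\bullet}\|\le c$ the bracketed boundary term has absolute value at most $4c$, while, since $\mathcal U^{u}-\mathcal U^{s}\ge\delta\,\mathrm{Id}$, for every $r$
\[
\|\mathcal U^{s}V\|^{2}+\|\mathcal U^{u}V\|^{2}\ \ge\ \tfrac12\|(\mathcal U^{u}-\mathcal U^{s})V\|^{2}\ \ge\ \tfrac12\delta^{2}.
\]
Therefore $2\int_{0}^{t}k(V(r))\,dr\le4c-\tfrac{\delta^{2}}{2}\,t$, that is $\tfrac1t\int_{0}^{t}k(V(r))\,dr\le\tfrac{2c}{t}-\tfrac{\delta^{2}}{4}$, which is $\le-\tfrac{\delta^{2}}{8}$ as soon as $t>t_{0}:=16c/\delta^{2}$. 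Taking $B=\delta^{2}/8$ yields \eqref{E1main}, with $B$ and $t_{0}$ depending only on $c,C,\lambda$ and hence uniform in $\theta$. The only point that is not mechanical is the uniform gap $\mathcal U^{u}-\mathcal U^{s}\ge\delta\,\mathrm{Id}$ in the noncompact setting; the rest is the classical Riccati computation.
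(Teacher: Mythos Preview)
Your overall architecture is exactly the paper's: pair the scalar Riccati identities for $\mathcal U^{s}$ and $\mathcal U^{u}$ along $V$, integrate, absorb the boundary term using $\|\mathcal U^{\bullet}\|\le c$, and feed in a uniform lower bound on $\|\mathcal U^{s}V\|^{2}+\|\mathcal U^{u}V\|^{2}$. The paper obtains that lower bound (its Lemma~\ref{L1-main}) by quoting Bolton's theorem that the Sasaki angle between $E^{s}$ and $E^{u}$ is uniformly bounded away from zero, followed by a short cosine computation; you instead aim for the operator gap $\mathcal U^{u}-\mathcal U^{s}\ge\delta\,\mathrm{Id}$ directly from the Anosov constants, which is a nice self-contained substitute for Bolton and makes the dependence of $B,t_{0}$ on $(c,C,\lambda)$ explicit.

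There is, however, a genuine error in your transversality step. You assert that Rauch comparison with curvature $\ge -c^{2}$ bounds both $\|J(t_{1})\|$ and $\|J'(t_{1})\|$. Rauch~I does bound $\|J(t_{1})\|$ by $c^{-1}\sinh(ct_{1})$, but with only a \emph{lower} curvature bound there is no uniform control of $\|J'(t_{1})\|$: already in one dimension, run $y''-c^{2}y=0$ on $[0,1]$ and then $y''+Ky=0$ on $[1,2]$; then $|y'|$ on $[1,2]$ is of order $\sqrt{K}\,|y(1)|$, unbounded as $K\to\infty$. The remedy is simple and keeps your argument intact: compare only horizontal parts. Since $d\phi^{t_{1}}\xi^{u}=\big(J_{u}(t_{1}),\,\mathcal U^{u}(\phi^{t_{1}}\theta)J_{u}(t_{1})\big)$ with $\|\mathcal U^{u}\|\le c$, one has $\|J_{u}(t_{1})\|\ge(1+c^{2})^{-1/2}\|d\phi^{t_{1}}\xi^{u}\|$, and trivially $\|J_{s}(t_{1})\|\le\|d\phi^{t_{1}}\xi^{s}\|$. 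From $J=J_{u}+J_{s}$ one gets
\[
\|J(t_{1})\|\ \ge\ \|J_{u}(t_{1})\|-\|J_{s}(t_{1})\|\ \ge\ \frac{1}{\mu_{\theta}}\Big(\frac{\lambda^{-t_{1}}}{C\sqrt{1+c^{2}}}-C\sqrt{1+c^{2}}\,\lambda^{t_{1}}\Big),
\]
so for a suitable $t_{1}=t_{1}(C,\lambda,c)$ this gives $\|J(t_{1})\|\ge 1/\mu_{\theta}$, and now Rauch~I on $\|J(t_{1})\|$ alone yields $\mu_{\theta}\ge c/\sinh(ct_{1})=:\delta$. With this correction your proof goes through, and the remaining bookkeeping is identical to the paper's.
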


When the manifold $M$ is compact or compactly homogeneous, \emph{i.e.}, the isometry group of its universal cover acts co-compactly,  Eberlien in {\cite{Ebe:73}} (see Corollary 3.4 and Corollary 3.5) proved the following result:\
\ \\
\ \\
\textbf{Theorem}\, {\cite{Ebe:73}}\emph{ Assume that  $M$   has no conjugate points, then 
\begin{enumerate}
\item[\emph{(i)}]If the geodesic flow is Anosov, then for all $\theta\in SM$ and for any  nonzero perpendicular  parallel vector field $V(t)$ along $\gamma_{\theta}(t)$ there is $t$ such that $k(V(t))<0$.
\item[\emph{(ii)}] If $M$ has no focal points and satisfies the condition of \emph{(i)}, then geodesic flow is of  Anosov type.
\end{enumerate}
}
Thus, Theorem \ref{main}  generalizes the result  {(i)} of the above theorem. It is worth emphasizing that this result can be applied in
non-compact manifolds.  \\
\ \\
Some immediate consequences of Theorem \ref{main} are:
\begin{C}\label{Cor1}
Let $M$ be a complete manifold with curvature bounded below by $-c^2$, whose geodesic flow is Anosov. Then, if $M$ has finite volume, we get 
$$\int_{SM}\emph{Ric}\, d\mu<-B\cdot\mu(SM)<0,$$
where \emph{Ric} is the Ricci curvature and $\mu$ is the Liouville 	measure on $SM$. 
\end{C}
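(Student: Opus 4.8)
The plan is to integrate the pointwise inequality from Theorem \ref{main} over $SM$ and use the fact that the Liouville measure is invariant under the geodesic flow, together with the classical identity relating the integral of a perpendicular parallel field's sectional curvature to the Ricci curvature. First, fix $\theta=(p,v)\in SM$ and let $\{e_1(t),\dots,e_{n-1}(t)\}$ be an orthonormal family of perpendicular parallel vector fields along $\gamma_\theta(t)$ (so that together with $\gamma'_\theta(t)$ they form a parallel orthonormal frame). Then for each $r$ one has $\sum_{i=1}^{n-1} k(e_i(r)) = \mathrm{Ric}(\gamma'_\theta(r))$. Applying Theorem \ref{main} to each $V(t)=e_i(t)$ and summing over $i$, we obtain, for every $t>t_0$,
\begin{equation*}
\frac{1}{t}\int_0^t \mathrm{Ric}(\phi^r(\theta))\, dr \;\leq\; -(n-1)B.
\end{equation*}
Since $M$ has curvature bounded below by $-c^2$, the Ricci curvature is bounded below (by $-(n-1)c^2$), and in particular $\mathrm{Ric}$ is integrable on $SM$ when $\mu(SM)<\infty$; this justifies all the integral manipulations below.

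Next I would integrate this inequality over $SM$ against $d\mu$. Using $\phi^r$-invariance of $\mu$, for each fixed $r$ we have $\int_{SM}\mathrm{Ric}(\phi^r(\theta))\,d\mu(\theta)=\int_{SM}\mathrm{Ric}\,d\mu$, so by Fubini (legitimate since $\mathrm{Ric}$ is bounded below and $\mu$ is finite),
\begin{equation*}
\int_{SM}\mathrm{Ric}\,d\mu \;=\; \frac{1}{t}\int_{SM}\left(\int_0^t \mathrm{Ric}(\phi^r(\theta))\,dr\right)d\mu(\theta) \;\leq\; -(n-1)B\,\mu(SM).
\end{equation*}
Since $-(n-1)B \leq -B$ for $n\geq 2$ (and the case $n=1$ is vacuous, as there are no perpendicular directions), this gives $\int_{SM}\mathrm{Ric}\,d\mu \leq -B\,\mu(SM) < 0$, which is the claimed bound. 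To get the strict inequality $\int_{SM}\mathrm{Ric}\,d\mu < -B\cdot\mu(SM)$ exactly as stated, one notes $(n-1)B>B$ strictly when $n\geq 3$; in dimension two one simply replaces $B$ by a slightly smaller constant, or observes that the constant $B$ in Theorem \ref{main} may be taken so that strictness holds — in any case the precise constant is immaterial and the essential content is the sign.

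I do not anticipate a serious obstacle here: the result is a direct averaging consequence of Theorem \ref{main}. The only points requiring a little care are (a) checking integrability of $\mathrm{Ric}$ so that Fubini and the invariance argument apply — this is where the hypothesis ``curvature bounded below by $-c^2$'' together with $\mu(SM)<\infty$ is used — and (b) the passage from the sectional-curvature statement of Theorem \ref{main} to a Ricci statement, which is just the summation over a parallel orthonormal frame noted above. Everything else is bookkeeping with the flow-invariance of the Liouville measure.
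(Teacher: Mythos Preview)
Your argument has one genuine gap: Theorem \ref{main} carries the hypothesis that $M$ has \emph{no conjugate points}, but Corollary \ref{Cor1} does not assume this. You invoke Theorem \ref{main} directly without checking this hypothesis. The paper closes this gap by appealing to Ma\~n\'e's theorem \cite{man:87}: since $M$ has finite volume and Anosov geodesic flow, $M$ has no conjugate points, and only then is Theorem \ref{main} (via Corollary \ref{Cor4}) applicable. You must insert this step.

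A smaller point: your claim that ``$\mathrm{Ric}$ bounded below and $\mu(SM)<\infty$ implies $\mathrm{Ric}$ integrable'' is false as stated --- a function bounded below on a finite measure space need not be integrable. The paper handles integrability by citing Guimar\~aes \cite{Gui:92}. Your Fubini manipulation is nonetheless salvageable without that reference: since $\mathrm{Ric}+(n-1)c^2\geq 0$, Tonelli applies to swap the integrals, and the time average is bounded (between $-(n-1)c^2$ and $-(n-1)B$) once $t>t_0$, so its $SM$-integral is finite; this forces $\int_{SM}\mathrm{Ric}\,d\mu$ to be finite as well.

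Once the Ma\~n\'e step is added, your route via Fubini and flow-invariance of $\mu$ is actually a bit more elementary than the paper's, which passes through Birkhoff's ergodic theorem to reach the same conclusion.
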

As the Gauss-Bonnet theorem holds for surfaces of finite volume (see {\cite{Ros:82}}), then the above corollary implies that, neither surface with Euler Characteristic  zero or postive and with finite volume admits a  geodesic flow of Anosov type.  
\begin{C}\label{Cor2}
No manifold $M$ that admits a geodesic $\gamma(t)$ and nonzero perpendicular para-\\llel vector field $V(t)$ with $k(V(t))\geq 0$ for any $t \geq t_1$, has geodesic flow of Anosov type. \\
\indent In particular, if $M$ is the product of two manifolds with curvature bounded below, furnished with the product metric, then the geodesic flow is never an Anosov flow. 
\end{C}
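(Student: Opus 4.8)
The plan is to deduce both statements from Theorem~\ref{main} by a short argument by contradiction. For the first statement, suppose $M$ carries a geodesic $\gamma_{\theta}$ and a nonzero perpendicular parallel field $V(t)$ along it with $k(V(t))\ge 0$ for every $t\ge t_1$, and assume, towards a contradiction, that the geodesic flow of $M$ is Anosov. Replacing $V$ by $V/\|V(0)\|$ (parallel transport is an isometry, and the sectional curvature of the plane spanned by $\gamma_{\theta}'(t)$ and $V(t)$ is unchanged under scaling of $V$), we may take $V$ to be a unit field. An Anosov geodesic flow has no conjugate points, so, assuming as in Theorem~\ref{main} that the curvature of $M$ is bounded below (which holds in the product case treated below), Theorem~\ref{main} is applicable to $M$.

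Next I would shift the basepoint forward by $t_1$: set $\theta_1:=\phi^{t_1}(\theta)$ and $W(s):=V(t_1+s)$. Then $W$ is a unit perpendicular parallel field along $\gamma_{\theta_1}(s)=\gamma_{\theta}(t_1+s)$, with $k(W(s))=k(V(t_1+s))\ge 0$ for all $s\ge 0$. Applying Theorem~\ref{main} to $\theta_1$ and $W$ produces constants $B,t_0>0$ with
\[
\frac{1}{t}\int_{0}^{t} k(W(r))\,dr\ \le\ -B\ <\ 0\qquad\text{for every } t>t_0,
\]
while the integrand is nonnegative, so the left-hand side is $\ge 0$. This contradiction shows the geodesic flow of $M$ is not Anosov. (Equivalently, one may apply Theorem~\ref{main} directly at $\theta$: since $\int_{0}^{t} k(V(r))\,dr\ge\int_{0}^{t_1} k(V(r))\,dr$ for $t\ge t_1$, the Ces\`aro averages converge to $0$, again contradicting the bound by $-B$.)

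For the ``in particular'' statement, let $M=M_1\times M_2$ with the product metric, where $M_1,M_2$ are complete with curvature bounded below; then $M$ is complete, and since the curvature tensor of a Riemannian product is the direct sum of those of the two factors, a short estimate gives that the sectional curvature of $M$ is bounded below as well. Fix a unit-speed geodesic $\gamma_1$ of $M_1$, a point $p_2\in M_2$, and a unit vector $w\in T_{p_2}M_2$, and put $\gamma(t)=(\gamma_1(t),p_2)$ and $V(t)=(0,w)$. Then $\gamma$ is a unit-speed geodesic of $M$ and $V$ is a nonzero parallel field along $\gamma$ orthogonal to $\gamma'(t)=(\gamma_1'(t),0)$; moreover the plane spanned by $\gamma'(t)$ and $V(t)$ has one factor in each $TM_i$, so by the splitting of the product curvature tensor its sectional curvature vanishes, i.e.\ $k(V(t))=0\ge 0$ for all $t$ (so one may take $t_1=0$). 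The first statement now shows that the geodesic flow of $M$ is not Anosov.

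The only substantive ingredient is Theorem~\ref{main}; granting it, the corollary is essentially a one-line contradiction. The points requiring care are minor: that the Anosov hypothesis already forces ``no conjugate points'' so that Theorem~\ref{main} can be invoked, and the elementary facts that a mixed $2$-plane in a Riemannian product is flat and that a product of manifolds of curvature bounded below again has curvature bounded below. I do not anticipate a genuine obstacle.
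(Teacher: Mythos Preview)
Your argument is correct and follows essentially the same route as the paper: both derive a contradiction from Theorem~\ref{main} by noting that the time average of $k(V(\cdot))$ cannot stay $\le -B$ once the integrand is eventually nonnegative (the paper splits $\int_0^t=\int_0^{t_1}+\int_{t_1}^t$ exactly as in your parenthetical alternative, while your basepoint shift to $\phi^{t_1}(\theta)$ is an equivalent device), and both handle the product case by exhibiting a geodesic in one factor together with a parallel field in the other, spanning a flat mixed plane. One caution: your sentence ``An Anosov geodesic flow has no conjugate points'' overstates what is known---this is established only in finite volume (Ma\~n\'e) and is explicitly flagged as open in general in Section~\ref{NCP}; the paper handles this by adopting ``no conjugate points'' as a standing hypothesis, which you should invoke rather than claim as a consequence of the Anosov property.
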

 It is worth noting  that, the second part of this corollary was well known for product\newline of compact manifolds or  product of compactly
 homogeneous manifolds (cf. \cite{Ebe:73}). Therefore, ours is a more general result. \\ 
 \indent  Thus, when we have the product of two manifolds, the last  result leads us to think that if it is possible to {change the product
 metric in such a way that the geodesic flow becomes  Anosov. In fact, in the Section \ref{Sec_Examples}, using the Theorem \ref{main2} and a 
 ``Warped Product" to construct a metric in $\re \times \mathbb{S}^1$ whose geodesic flow is Anosov (see Section  \ref{Sec_Examples}). \\
\ \\
\indent Our third corollary stated that the world of the compact manifold and non-compact manifold are, in some way, very different.\\
Before we present the corollary, we consider the following function $\mathcal{K}(t)$, associated to the sectional curvatures of the a complete
non-compact manifold $M$, as the follows:\\

For each $x\in M$ and each plane $P\subset T_x M$, we denotes by $k(P)$ the sectional curvature of plane $P$. Thus, we defines $k(x)=\ds \sup_{P\subset T_{x}M}k(P)$.  Fixed a point $O\in M$, we {define}  $$\mathcal{K}(t):=\ds \sup_{x \in M\setminus B_{t}(O)}k(x),$$
where $B_{t}(0)$ is the open ball of center $O$ and radius $t$.\\
\indent We say that a complete non-compact manifold $M$ is \emph{asymptotically flat} if \\ $\ds \lim_{t\to +\infty}\mathcal{K}(t)=0$.
\begin{C}\label{Cor3} 
Let $M$ be a asymptotically flat manifold, assume that $M$ has no conjugate points and curvature bounded below. Then its geodesic flow is not an
Anosov flow. 
\end{C}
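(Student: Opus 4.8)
The plan is to argue by contradiction, using Theorem~\ref{main} as the only substantive input. Suppose the geodesic flow of $M$ is Anosov. Since, by hypothesis, $M$ has no conjugate points and its curvature is bounded below, say by $-c^{2}$, Theorem~\ref{main} applies and furnishes constants $B>0$ and $t_{0}>0$ such that
\[
\frac{1}{t}\int_{0}^{t}k(V(r))\,dr\le -B
\]
for every $\theta\in SM$, every unit perpendicular parallel field $V$ along $\gamma_{\theta}$, and every $t>t_{0}$. The goal is then to exhibit one geodesic along which the left-hand side, for some $t>t_{0}$, is larger than $-B$; this will be the desired contradiction.

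To build that geodesic I would first convert asymptotic flatness into pointwise smallness of the curvature far from a basepoint. Fix $O\in M$ as in the definition of $\mathcal{K}$ and put $\varepsilon:=B/2$. Since $\mathcal{K}(t)\to 0$ as $t\to\infty$, choose $R>0$ so large that every sectional curvature at every point of $M\setminus B_{R}(O)$ is less than $B/2$ in absolute value; recall that the curvature is in any case globally bounded below by $-c^{2}$, so the role of asymptotic flatness here is precisely to force the curvature itself (also from below) to be small outside a large ball. Next I would produce a geodesic segment of length exceeding $t_{0}$ that stays entirely outside $B_{R}(O)$: because $M$ is complete and non-compact, from $O$ there issues a ray $\sigma\colon[0,\infty)\to M$ (a geodesic with $d(\sigma(s),\sigma(s'))=|s-s'|$), so $d(\sigma(s),O)=s$ for all $s\ge 0$; setting $\theta:=\sigma'(R+1)$ we get $\gamma_{\theta}(r)=\sigma(R+1+r)$ and hence $d(\gamma_{\theta}(r),O)=R+1+r>R$ for every $r\ge 0$. (Alternatively, one may simply take any $p$ with $d(p,O)>R+t_{0}+1$ and an arbitrary direction, invoking the triangle inequality.)

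It then only remains to combine the two facts. Let $V$ be a unit perpendicular parallel field along $\gamma_{\theta}$. For each $r\in[0,t_{0}+1]$ the plane spanned by $\gamma_{\theta}'(r)$ and $V(r)$ is based at a point lying outside $B_{R}(O)$, so $|k(V(r))|<B/2$; integrating over $[0,t_{0}+1]$ and dividing by $t_{0}+1$ gives
\[
\frac{1}{t_{0}+1}\int_{0}^{t_{0}+1}k(V(r))\,dr>-\frac{B}{2}>-B .
\]
Since $t_{0}+1>t_{0}$, this contradicts the estimate coming from Theorem~\ref{main}. Hence the geodesic flow of $M$ cannot be Anosov.

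The step I expect to carry the weight is the passage from the hypothesis ``asymptotically flat'' to genuine, two-sided smallness of the curvature outside a large ball: the uniform lower bound $-c^{2}$ is global but not small, so what is really being used is that the sectional curvature tends to $0$ at infinity, and one must check that this is what the definition of $\mathcal{K}$ (together with the lower curvature bound) delivers. Once that is secured, the remaining ingredients — existence of a ray in a complete non-compact manifold (equivalently, the triangle-inequality argument) and the elementary averaging — are routine, and the contradiction with Theorem~\ref{main} is immediate.
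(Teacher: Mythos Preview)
Your argument is correct and is essentially the paper's own proof: both take a ray from the basepoint, observe that the curvatures along it tend to zero, and contradict the uniform negative average coming from Theorem~\ref{main} (the paper routes through the Ricci-curvature version, Corollary~\ref{Cor4}, while you apply Theorem~\ref{main} directly to a single parallel field --- a cosmetic difference). Your closing caveat about two-sided smallness is well placed: as written, the paper's definition of $\mathcal{K}$ controls only the \emph{supremum} of sectional curvatures at infinity, yet its proof, like yours, needs the infimum to tend to zero as well; this is an implicit reading of ``asymptotically flat'' that both arguments share.
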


This corollary allows us to construct a large category of non-compact manifold whose geodesic flow is not Anosov. By example, any complete minimal surface embedded in $\re^3$ with finite total curvature is asymptotically flat (see \cite{Sch:83}). \\
In particular, the geodesic flow of the minimal  surface as catenoid, helicoid, Costa's surface and Costa-Hoffman-Meeks surface is not Anosov (see some picture below).

\begin{figure}[!htb]
	\centering
	\subfloat[Catenoid]{
		\includegraphics[height=4cm]{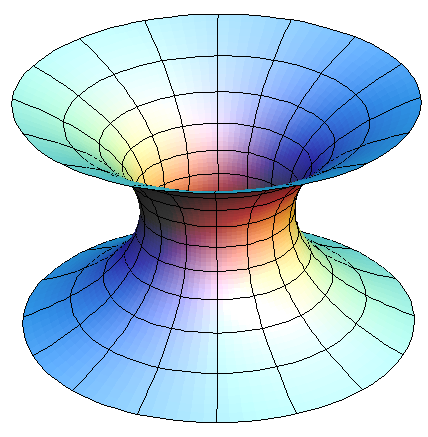}
		\label{figdroopy}
	}
	\quad 
		\subfloat[Helicoid]{
		\includegraphics[height=4cm]{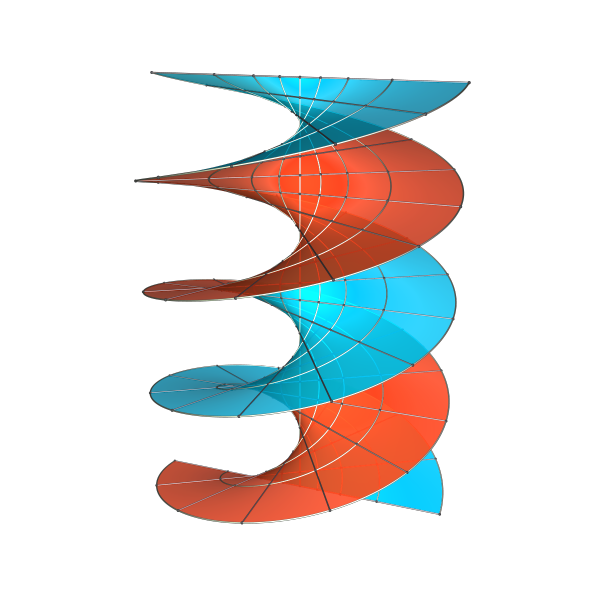}
		\label{helicoid}
	}
	\quad 
	\subfloat[Costa's surface]{
		\includegraphics[height=4cm]{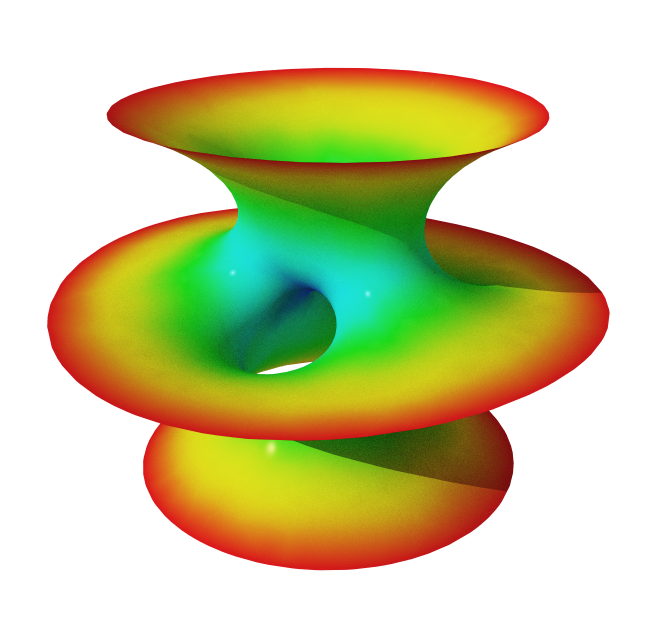}
		\label{figsnoop}
	}
	\label{fig01}
\end{figure}

Note that for manifold of negative curvature bounded between two negative constants (this condition is called ``\emph{pinched}"), its geodesic 
flow is Anosov (cf. \cite{Ano:69}). However, in the non-compact case, the negative sign  of the curvature does not implies that we have a 
geodesic flow of Anosov type.  In fact, in Section \ref{Sec_Examples}, we construct a non-compact surface of negative curvature whose 
geodesic is not Anosov. In other words, from the point view of the dynamic, a manifold of negative curvature and a manifold of pinched negative 
curvature are totally different.   
 

The second result of this paper is a more general version, in dimension two, of (ii) at Theorem \cite{Ebe:73} above. In fact, if our manifold 
has dimension two and does not have focal points, then the conclusion of the Theorem \ref{main} is a sufficient 
condition to the geodesic flow to be an Anosov flow. More precisely,  

\begin{T}\label{main2}
Let $M$ be a complete surface with curvature bounded below by $-c^2$ without focal points. Assume that, there are two constants $B, t_0>0$ such that for all geodesic $\gamma(t)$,
\begin{equation}\label{e1main2}
\frac{1}{t}\int_{0}^{t} k(\gamma(s)) \ ds   \leq -B \ \ \text{whenever} \ \ t > t_0,
\end{equation} 
then the geodesic flow is an Anosov flow.
\end{T}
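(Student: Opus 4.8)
The plan is to reduce the whole problem to the scalar Jacobi equation and a one\-dimensional Riccati estimate along each geodesic. Since $M$ has no focal points it has no conjugate points, so for every $\theta\in SM$ the stable and unstable Green (Jacobi) fields exist; writing a perpendicular Jacobi field along $\gamma_\theta$ as $J(t)=y(t)V(t)$ with $V$ a parallel unit normal field, one gets strictly positive scalar solutions $y_s,y_u$ of $y''+k(\gamma_\theta(s))\,y=0$, with Riccati quotients $u_s=y_s'/y_s$ and $u_u=y_u'/y_u$ solving $u'+u^2+k=0$. The absence of focal points gives the signs $u_s\le 0\le u_u$, and the bound $k\ge -c^2$ gives, by the standard Riccati comparison (a solution that ever drops below $-c$ blows up in finite time), the uniform a priori bounds $-c\le u_s\le 0\le u_u\le c$ along every geodesic.

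The core of the argument is an integrated Riccati estimate. From $u_s^2=-u_s'-k$ we obtain, for every $\theta$ and every $t>t_0$,
\[
\int_0^t u_s(r)^2\,dr \;=\; u_s(0)-u_s(t)-\int_0^t k(\gamma_\theta(r))\,dr \;\ge\; B\,t-2c ,
\]
using $|u_s|\le c$ and the hypothesis rewritten as $-\int_0^t k(\gamma_\theta(r))\,dr\ge Bt$. Since $u_s\le 0$ and $|u_s|\le c$ imply $u_s^2\le c\,(-u_s)$, this gives $\int_0^t(-u_s)\ge\tfrac{B}{c}t-2$, that is,
\[
\int_0^t u_s(r)\,dr \;\le\; -\tfrac{B}{c}\,t+2 \qquad (t>t_0),
\]
with $B,c,t_0$ independent of $\theta$. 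Applying the same estimate to the time\-reversed geodesic (along which the stable Riccati solution is $s\mapsto -u_u(-s)$) yields the symmetric inequality $\int_{-t}^{0}u_u(r)\,dr\ge\tfrac{B}{c}t-2$ for $t>t_0$.

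Next I would convert these into the hyperbolicity estimates. In the Sasaki metric the vector of $T_\theta SM$ associated to the perpendicular Jacobi field $J=yV$ satisfies $\|d\phi^t_\theta\xi\|^2=y(t)^2+y'(t)^2=y(t)^2\big(1+u(t)^2\big)$. Since $1\le 1+u_s^2\le 1+c^2$ and $y_s(t)=y_s(0)\exp\!\big(\int_0^t u_s\big)$, the one\-dimensional stable bundle $E^s$ (spanned by the class of $J_s$) satisfies $\|d\phi^t_\theta|_{E^s}\|\le\sqrt{1+c^2}\,\exp\!\big(\int_0^t u_s\big)\le C\lambda^{t}$ for all $t\ge 0$, with $\lambda=e^{-B/c}\in(0,1)$ and a uniform constant $C$ (the range $0\le t\le t_0$, where $y_s$ is already nonincreasing, being absorbed into $C$). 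The time\-reversed estimate gives $\|d\phi^{-t}_\theta|_{E^u}\|\le C\lambda^{t}$ in the same way. It remains to see that $T(SM)=E^s\oplus\langle G\rangle\oplus E^u$: transversality $E^s\cap E^u=\{0\}$ is the pointwise inequality $u_s(0)<u_u(0)$, which holds because equality would, by uniqueness for the Riccati equation, force $u_s\equiv u_u$ on $\mathbb{R}$, hence $0\le u_u=u_s\le 0$ and the common solution $\equiv 0$, contradicting $\int_0^t u_s^2\ge Bt-2c>0$ for large $t$; in dimension two $\langle G\rangle$ is complementary to the (two\-dimensional) space of perpendicular Jacobi classes, so this yields the splitting, while continuity of the Green subbundles for manifolds without focal points is classical. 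Together with the flow\-invariance of $E^s,E^u$ built into the Green construction, this is precisely the Anosov property.

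The step I expect to be delicate is not the curvature estimate --- which is essentially the one\-line computation above --- but making sure that every structural ingredient that is automatic on compact manifolds is genuinely available here: existence, invariance and above all continuity of the Green subbundles on a non\-compact surface without focal points, the a priori $L^\infty$ control of the Riccati solutions, and the uniformity in $\theta$ of all the constants produced. Once that framework is in place, the hypothesis on the averaged curvature plugs into the integrated Riccati identity and the exponential rates fall out uniformly.
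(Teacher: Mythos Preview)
Your argument is correct and follows essentially the same route as the paper: both proofs integrate the Riccati equation, use the sign $u_s\le 0$ coming from the no--focal--points assumption together with the a priori bound $|u_s|\le c$ to turn the pointwise inequality $u_s^2\le c\,(-u_s)$ into $\int_0^t u_s\le -\tfrac{B}{c}t+O(1)$, and then read off exponential contraction of $E^s$ (and, by time reversal, expansion of $E^u$). The only packaging differences are that the paper carries out the estimate in matrix form for general $n$ (its Proposition~1 and Corollary~5) before specializing via $|\det D\phi^t|_{E^s}|=\|D\phi^t|_{E^s}\|$ in dimension two, and then invokes a sub--multiplicativity lemma (its Lemma~4.1) to pass from ``eventual'' to ``global'' exponential decay, whereas you work with the scalar Riccati from the outset and absorb the range $[0,t_0]$ directly using $y_s$ nonincreasing and $1+u_s^2\le 1+c^2$; your transversality argument ($u_s(0)=u_u(0)\Rightarrow u_s\equiv u_u\equiv 0$, contradicting $\int_0^t u_s^2\ge Bt-2c$) is also a bit more explicit than the paper's.
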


In the Subsection \ref{Examples}, we use the Theorem \ref{main2} to construct a family of non-compact surfaces with non-positive curvature, which is non compactly homogeneous, in particular, the Eberlein result does not apply, but its  geodesic flow is Anosov. Using this family of surfaces, we construct a non compact surface with regions of positive curvature and Anosov geodesic flow.  To be more specific, we make a ``\emph{Warped Product}" of the circle $S^1$ and  the line $\mathbb{R}$ to construct such surfaces (cf. Section \ref{Sec_Examples}). \\

We take Theorem \ref{main} and Theorem \ref{main2} to present the following corollary,
\begin{C}\label{Cor3'}
Let $M$ be a complete surface with curvature bounded below by $-c^2$ without focal points. Then the geodesic flow is Anosov if and only if there
are two constants $B, t_0>0$ such that for all geodesic $\gamma(t)$
\begin{equation*}
\frac{1}{t}\int_{0}^{t} k(\gamma(s)) \ ds   \leq -B \ \ \text{whenever} \ \ t > t_0.
\end{equation*}
\end{C}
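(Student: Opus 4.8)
The plan is simply to combine Theorem \ref{main} with Theorem \ref{main2}, the only mild point being to match the curvature quantities appearing in the two statements; in dimension two this is automatic.

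First I would record the standard fact that a manifold without focal points has no conjugate points. Hence, under the standing hypotheses of the corollary (a complete surface $M$ with curvature bounded below by $-c^2$ and without focal points), the assumptions of Theorem \ref{main} are fulfilled as soon as the geodesic flow $\phi^t$ is Anosov, while the assumptions of Theorem \ref{main2} are fulfilled as soon as the integral condition holds.

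For the ``only if'' direction, assume $\phi^t$ is Anosov. By Theorem \ref{main} there exist $B,t_0>0$ such that $\frac{1}{t}\int_{0}^{t} k(V(r))\,dr \leq -B$ for every $\theta\in SM$, every unit perpendicular parallel vector field $V$ along $\gamma_\theta$, and every $t>t_0$. Since $\dim M = 2$, the normal bundle of a geodesic is a line bundle, so a unit perpendicular parallel field $V(t)$ along $\gamma_\theta$ is unique up to sign, and $k(V(t))$ is exactly the Gaussian curvature $k(\gamma_\theta(t))$ of $M$ at the point $\gamma_\theta(t)$. Thus \eqref{E1main} specializes precisely to \eqref{e1main2}, which is the asserted curvature condition.

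For the ``if'' direction, assume there are $B,t_0>0$ with $\frac{1}{t}\int_{0}^{t} k(\gamma(s))\,ds \leq -B$ for all geodesics $\gamma$ and all $t>t_0$. This is exactly the hypothesis of Theorem \ref{main2}, whose conclusion is that the geodesic flow is Anosov. Combining the two implications yields the stated equivalence. I do not expect a genuine obstacle here: the corollary is a direct consequence of the two theorems, and the only step requiring a word of justification is the identification of $k(V(r))$ in \eqref{E1main} with the Gaussian curvature in \eqref{e1main2}, together with the classical implication that absence of focal points forces absence of conjugate points.
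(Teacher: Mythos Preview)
Your proposal is correct and matches the paper's approach exactly: the corollary is stated as an immediate combination of Theorem~\ref{main} and Theorem~\ref{main2}, with no separate proof given, and the two points you single out (no focal points $\Rightarrow$ no conjugate points, and $k(V(t)) = k(\gamma_\theta(t))$ in dimension two) are precisely the identifications needed to pass between the two statements.
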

\ \\

We believe that, in greater dimension, it is possible to obtain a similar result of Theorem \ref{main2}. In fact: \\
Suppose that $M$ has dimension $n$. Then consider for each $\theta\in SM$ consider a orthonormal family of $n$ perpendicular parallel vector fields $V_{j}(t)$ along 
$\gamma_{\theta}$, $j=1,\dots, n$, where $V_n(t) = \gamma_{\theta}'(t)$. Then, we present the following conjecture, 

\textbf{Conjecture:}\, \emph{Let $M$ be a complete  manifold with curvature bounded below, without focal points. Assume that, there are two positive constants  $B,	t_0$ such that for all geodesic $\gamma_{\theta}(t)$ and all $j=1,\dots, n-1$ we have 
$$\frac{1}{t}\int_{0}^{t}k(V_{j}(s))ds\leq -B\,\, \text{whenever} \,\, t>t_0,$$
then the geodesic flow is an Anosov flow.} \\
\ \\
\textbf{Structure of the Paper:}\, In Section \ref{Not_Pre}, we present the notation and geometry setting of the paper.
In Section \ref{Sec_Main1}, we present the proof of Theorem \ref{main} and its consequences. In Section \ref{Nec_Cond}, we present the proof of
Theorem \ref{main2} and some results addressed to the conjecture. Finally, in Section \ref{Sec_Examples}, we apply the Theorem \ref{main2} to
construct a new family of  non-compact surfaces with Anosov geodesic flow. At the end of Section \ref{Sec_Examples}, we also  show a example of {a}
non-compact surface of negative curvature whose geodesic flow is not Anosov. 
\section{Notation and Preliminaries}\label{Not_Pre}
Through rest of this paper, $M=(M,g)$ will denote a complete Riemannian manifold without boundary of dimension $n\geq 2$, $TM$  its the tangent bundle, $SM$  its unit tangent bundle, $\pi\colon TM\to M$ will denote the canonical projection, and $\mu$ the Liouville measure of $SM$ (see \cite{P}). 

\subsection{Geodesic flow }\label{GeoFlow}
For $\theta=(p,v)$ a point of $SM$. Let $\gamma_{\theta}(t)$ be the unique geodesic with initial conditions $\gamma_{\theta}(0)=p$ and 
$\gamma_{\theta}'(0)=v$. For a given $t\in \mathbb{R}$, let $\phi^t:SM \to SM$ be the diffeomorphism given by 
$\phi^t(\theta)=(\gamma_{\theta}(t),\gamma_{\theta}'(t))$. Recall that this family is a flow (called the \textit{geodesic flow}) in the sense that  $\phi^{t+s}=\phi^{t}\circ \phi^{s}$ for all $t,s\in \mathbb{R}$. 

Let $V:=ker\, D\pi$ be the \textit{vertical} sub-bundle of $T(TM)$ (tangent bundle of $TM$). \\
Let $\alpha\colon TTM\to TM$ be the Levi-Civita connection map of $M$. Let $H:=ker \alpha$ be the horizontal sub-bundle. Recall that, $\alpha$ is defined as follow: Let $\xi \in T_{\theta}TM$ and $z:(-\epsilon,\epsilon) \rightarrow TM$  be a curve adapted to $\xi$, \emph{i.e.},  $z(0) = \theta$ and $z'(0) = \xi$, where $z(t) = (\alpha(t),Z(t))$, then
$$\alpha_{\theta}(\xi)=\nabla_{\frac{\partial}{\partial\,t}}Z(t)|_{t=0}.$$


{For each $\theta$, the maps $d_{\theta}\pi|_{H(\theta)}: H(\theta) \rightarrow T_pM$ and $K_{\theta}|_{V(\theta)}:V(\theta) \rightarrow T_pM$ are linear isomorphisms. Furthermore, $T_{\theta}TM = H(\theta) \oplus V(\theta)$ and the map  $j_{\theta}:T_{\theta}TM \rightarrow T_pM \times T_pM$ given by 
	$$ j_{\theta}(\xi) = (D_{\theta}\pi(\xi),K_{\theta}(\xi))        $$
	is a linear isomorphism. }

{	Using the decomposition $T_{\theta}TM = H(\theta) \oplus V(\theta)$, we can identify a vector in $\xi \in T_{\theta} TM$ with the pair of vectors in $T_{p}M$, $(D_{\theta}\pi(\xi),K_{\theta}(\xi))$  and define in a natural way  a Riemannian metric on $TM$
that makes $H(\theta)$ and $V(\theta)$ orthogonal. This metric is called the Sasaki metric and is given by
	$$ g_{\theta}^S(\xi,\eta) = \langle D_{\theta}\pi(\xi), D_{\theta}\pi(\eta)  \rangle  +  \langle K_{\theta}(\xi),    K_{\theta}(\eta)   \rangle. $$}


From now on, we consider the Sasaki metric restricted to the unit tangent bundle $SM$.  It is easy to proof that the geodesic flow preserves
the volume measure  generate by this Riemannian metric in $SM$. Furthermore, this volume measure in $SM$ coincides with the Liouville measure 
$m$ up to a constant. When $M$ has finite volume the Liouville measure is finite.

\noindent Consider the one-form $\beta$ in $SM$ defined for $\theta=(p,v)$ by 
$$\beta_{\theta}(\xi) = g_{\theta}^S(\xi,G(\theta)) = \langle D_{\theta}\pi(\xi),v \rangle_p.$$
Observe that $ker\, \beta_{\theta}\supset V(\theta) \cap T_{\theta} SM $.  It is possible prove that a vector $\xi \in T_{\theta}TM$ lies in $T_{\theta}SM$ 
with $\theta=(p,v)$ if and only if $\langle \alpha_{\theta}(\xi) , v\rangle = 0$. Furthermore, 
$\beta$ is  a contact form invariant by the geodesic flow whose Reeb vector field is the geodesic vector field $G$. Furthermore, the sub-bundle $S=\ker \beta$ is the orthogonal complement of the subspace spanned by $G$. Since $\beta$ is invariant by the geodesic flow, then the sub-bundle $S$ is invariant by $\phi^{t}$, \emph{i.e.}, $\phi^t(S(\theta)) = S(\phi^{t}(\theta))$ for all $\theta\in SM$ and for all $t\in\mathbb{R}$.

To understand the behavior of $d\phi^t$ let us to introduce the definition of Jacobi field.  A vector field $J$ along of a geodesic
$\gamma_{\theta}$ is called the Jacobi field if it satisfies the equation 
\begin{equation}
J'' + R(\gamma'_{\theta},J)\gamma'_{\theta} = 0,
\end{equation}
where $R$ is the Riemann curvature tensor of $M$ and $``\,'\,"$ denotes the covariant derivative along $\gamma_{\theta}$. Note that, for  $\xi=(w_1,w_2) \in T_{\theta}SM$, (the horizontal and vertical decomposition) with $w_1, w_2 \in T_{p} M$ and $\langle v, w_2 \rangle =0$, it is known that  
$d\phi_{\theta}^{t}(\xi) = (J_{\xi}(t), J'_{\xi}(t))$, where $J_{\xi}$ denotes the unique Jacobi vector field along $\gamma_{\theta}$ such that $J_{\xi}(0) = w_1$ and $J'_{\xi}(0) = w_2$. For more details see \cite{P}.

\subsection{No conjugate points}\label{NCP}
Suppose $p$ and $q$ are points on a Riemannian manifold, and $\gamma$ is a geodesic that connects $p$ and $q$. Then $p$ and $q$ are \emph{conjugate points along} $\gamma$  if there exists a non-zero Jacobi field along $\gamma$  that vanishes at $p$ and $q$. 
When neither two  points in $M$ are conjugated, we say the manifold $M$ \emph{has no conjugate points}. Another important kind of manifolds for this paper are the manifolds 
without focal points, we say that a manifold  $M$ \emph{has no focal point}, if for any unit speed geodesic $\gamma$ in $M$  and  for any 
Jacobi vector field $Y$ on $\gamma$  such that $Y(0) = 0$ and $Y'(0)\neq 0$ we have $(||Y||^2)'(t)>0$, for any $t > 0$. It is clear that if
a manifold has no focal points, then also has no conjugate points. \ \\
\indent The more classical example of manifolds without focal points and therefore without conjugate points, are the manifolds of
non-positive curvature. It is possible to construct a manifold having  positive curvature in somewhere, and without conjugate points. There is
many examples of manifold without conjugate points. We emphasize here, for example, in  \cite{man:87}, Ma\~n\'e proved that, when volume is
finite and the geodesic flow is Anosov, then the manifold has no conjugate points. This latter had been proved by Klingenberg
(cf. \cite{kli:74}) in the compact case. In the case of infinite volume the result by Ma\~n\'e \cite{man:87} is an open problem: \\ 
\indent When the geodesic flow is  Anosov, then the manifold has no conjugate points? \\
The last fact showed that, if we would like to work with  geodesic flow of  Anosov type, we assume then that our manifold has no conjugate
points (condition superfluous in finite volume via Ma\~ne result). Therefore, from now on, we can assume that the manifold $M$ has no
conjugate points.\\ 
\indent {Now} suppose that $M$ has no conjugate points and its sectional curvatures are bounded below by $-c^2$. In this case, if the
geodesic flow $\phi^t:SM \to SM$ is Anosov, then in \cite{Bol:79}, Bolton showed  that there exists a positive constant $\delta$ such that for
all $\theta \in SM$, the angle between $E^s(\theta)$ and $E^{u}(\theta)$ is greater than $\delta$. Furthermore, if $J$ is a perpendicular
Jacobi vector field along $\gamma_{\theta}$ such that $J(0) = 0$ then there exists $A>0$ and $s_0\in\mathbb{R}$ such that 
$\parallel J(t) \parallel \geq A\parallel  J(s) \parallel$ for $t \geq s \geq s_0$. Therefore, for  $ \xi \in E^{s}(\theta)$ and 
$\eta \in E^{u}(\theta)$ since $\parallel J_{\xi}(t) \parallel \to 0$ as $t \to + \infty$ and $\parallel J_{\eta}(t) \parallel \to 0$ as 
$t \to - \infty$ follows that $J_{\xi}(0) \neq 0 $ and $J_{\eta}(0) \neq 0 $. In particular, $E^{s}(\theta) \cap V(\theta) = \{0\}$ and 
$E^{u}(\theta) \cap V(\theta) = \{0\}$ for all $\theta \in SM$.\\

\indent For  $\theta = (p, v) \in SM$, we denote by $N(\theta):= \{w \in T_{x}M : \langle w, v\rangle = 0\}$.
By the 
identification of the Subsection \ref{GeoFlow} we can  write $S(\theta):=\emph{ker} \, \beta = N(\theta)\times N(\theta)$, $V(\theta) \cap S(\theta) =  \{0\} \times N(\theta) $ and $H(\theta) \cap S(\theta) =  \{0\} \times N(\theta) $. Thus, if  $E \subset S(\theta)$ is a subspace,
$\dim E = n-1$, and $E \cap (V(\theta) \cap S(\theta)) = \{0\}$ then 
$E \cap (H(\theta) \cap S(\theta))^{\perp} = \{0\}$. Hence, there exists a unique linear map $T: H(\theta) \cap S(\theta) \to  V(\theta) \cap S(\theta)$ such that $E$ is the graph of  $T$. In other words, there exists a unique linear map $T: N(\theta) \to N(\theta)$ such that $E = \{(v,Tv) : v \in N(\theta)\}$. Furthermore, the linear map $T$ is symmetric if and only if $E$ is Lagrangian (see \cite{P}).\\

\indent It is known that if the  geodesic flow is Anosov, then for each $\theta\in SM$, the sub-bundles $E^{s}({\theta})$ and
$E^{u}(\theta)$ are Lagrangian and $E^{s}({\theta}) \oplus E^{u}({\theta}) = S(\theta)$. Therefore, for each $t\in \mathbb{R}$, we can write 
$d\phi^t(E^{s}(\theta)) = E^{s} (\phi^{t}(\theta)) = {\rm graph} \, U_{s}(t)$ and 
$d\phi^t(E^{u}(\theta)) = E^{u} (\phi^{t}(\theta)) = {\rm graph} \, U_{u}(t)$, where 
$ U_{s}(t): N(\phi^{t}(\theta)) \to N(\phi^{t}(\theta))$ and $ U_{u}(t): N(\phi^{t}(\theta)) \to N(\phi^{t}(\theta))$ are symmetric maps.

Now we describe a useful method of L. Green (cf. \cite{Gre:58}), to see what properties the maps $U_{s}(t)$ and $U_{u}(t)$ satisfies.

Let $\gamma_{\theta}$ be a geodesic, and consider $V_1,\mathellipsis,V_n$ a system of parallel orthonormal vector fields  along
$\gamma_{\theta}$ with $V_n(t) = \gamma'_{\theta}(t)$.
If $Z(t)$ is a perpendicular vector field along $\gamma_{\theta}(t)$, we can write $$Z(t) =\displaystyle \sum_{i=1}^{n-1} y_{i}(t)V_i(t).$$ 
Note that $Z(s)$ can be identified with the curve $\alpha(s) = (y_{1}(s), \mathellipsis,y_{n-1}(s))$ and $Z'(s)$ can be identified with the curve $\alpha'(s) = (y_{1}'(s), \mathellipsis,y_{n-1}'(s))$. Conversely, 
any curve in $\mathbb{R}^{n-1}$ can be identified with a perpendicular vector field on $\gamma_{\theta}(t)$, so we can identify $N(\phi^{t}(\theta))$ with $\mathbb{R}^{n-1}$ and consider the maps associated to stable and unstable subbundles defined in $\mathbb{R}^{n-1}$.

Now for each $t \in \mathbb{R}$, consider the symmetric matrix $R(t) = (R_{i,j}(t))$, where $1 \leq i,j \leq n-1$, $R_{i,j} = \langle R(\gamma'_{\theta}(t), V_i(t))\gamma'_{\theta}(t),  V_j(t)) \rangle$ and $R$ is the curvature tensor of $M$. The family of operators $ U_{s}(t): \mathbb{R}^{n-1} \to \mathbb{R}^{n-1}$ and $ U_{u}(t): \mathbb{R}^{n-1} \to \mathbb{R}^{n-1}$ satisfies the Ricatti equation

\begin{eqnarray}\label{Ricatti}
U'(t) + U^{2}(t) + R(t) = 0,
\end{eqnarray}
 see \cite{P}.

Now consider the $(n-1) \times (n-1)$ matrix Jacobi equation
\begin{equation}\label{Jacobi}
Y''(t) + R(t)Y(t) = 0.
\end{equation}
If $Y(t)$ is solution of (\ref{Jacobi}) then for each $x \in \mathbb{R}^{n-1}$, the curve $ \beta(t) = Y(t)x$ corresponds to a Jacobi perpendicular vector on $\gamma_{\theta}(t)$.  For $\theta\in SM$, $r\in \mathbb{R}$, we consider  $Y_{\theta,r}(t)$ be the unique solution of (\ref{Jacobi}) satisfying $Y_{\theta,r}(0) = I$ and $Y_{\theta,r}(r) = 0$. In  \cite{Gre:58}, Green proved  that $\ds\lim_{r \to -\infty}Y_{\theta,r}(t)$ exists for all $\theta\in SM$ (see also \cite[Sect. 2]{Ebe:73}). Moreover, if  we define:
\begin{equation}\label{E1Jac}
Y_{\theta, {u}}(t):= \lim_{r\to -\infty}Y_{\theta,r}(t),
\end{equation}
we obtain a solution of Jacobi equation (\ref{Jacobi}) such that $\det Y_{\theta, {u}}(t)\neq 0$. Furthermore, it is proved in \cite{Gre:58} (see also  \cite{ManFre:82} and \cite{Ebe:73}) that $\ds\frac{DY_{\theta, {u}}}{Dt}(t)=\lim_{r\to -\infty}\frac{DY_{\theta,r}}{Dt}(t)$. Furthermore, if 
$$U_{r}(\theta)=\frac{DY_{\theta,r}}{Dt}(0) ; \, \, U^{u}(\theta)=\frac{DY_{\theta,{u}}}{Dt}(0),$$
then  $$U^{{u}}(\theta)=\lim_{r\to -\infty}U_{r}(\theta).$$
It is easy to proof that (see \cite{ManFre:82})
$$U^{u}(\phi^{t}(\theta))=\frac{DY_{\theta, {u}}}{Dt}(t){Y^{-1}_{\theta,{u}}(}t)$$
for every $t \in \mathbb{R}$. It follows that $U^{u}$ is a symmetric solution of the Ricatti equation 
\begin{eqnarray}
	U'(t) + U^{2}(t) + R(t) = 0,
\end{eqnarray}
see 
analogously, taking the limit when $r\to +\infty$, we have defined $U^s(\theta)$, that also satisfies the Ricatti equation (\ref{Ricatti}).
Furthermore, in \cite{Gre:58}, Green also showed that, in the case of curvature bounded below by $-c^2$,  symmetric solutions of the
Ricatti equation which are defined  for all $t \in \mathbb{R}$ are bounded by $c$, \emph{i.e.},  
\begin{equation}\label{U-bounded}
\displaystyle\sup_{t} \parallel U^{s}(t)  \parallel \leq c \, \, \, \ \ \ \ \ \  \text{and} \, \, \, \ \ \ \ \ \ \displaystyle\sup_{t} \parallel U^{u}(t)  \parallel \leq c.
\end{equation} 



\section{Proof of Theorem \ref{main} and its Consequences}\label{Sec_Main1}
In this section, we prove the Theorem \ref{main} and  the Corollaries \ref{Cor1}, \ref{Cor2} and \ref{Cor3}.

 In this direction, we prove the Lemma \ref{L1-main}, which use a Bolton's  result (see \cite{Bol:79}), to have some control on the sum 
 $||U_{s}||^2+||U_{u}||^2$. More specifically,

\begin{Le}\label{L1-main}
Let $M$ be a complete manifold with curvature bounded below by $-c^2$ without conjugate points and whose geodesic flow is Anosov. Then, 
there is a constant $D > 0$, such that for any $\theta\in SM$ and  for any unit perpendicular parallel vector field $V(t)$ along $\gamma_{\theta}$ we have 
$$||U_{s}(V(t))||^2 + ||U_{u}(V(t))||^2 \geq D, \ \ \text{for all} \ \ t \in \mathbb{R}.$$	  
\end{Le}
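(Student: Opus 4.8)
The plan is to combine Bolton's uniform angle estimate between $E^s$ and $E^u$ with the graph description of these sub-bundles recalled in Section~\ref{Not_Pre}. For each $t$ the subspaces $E^s(\phi^t\theta)$ and $E^u(\phi^t\theta)$ sit inside $S(\phi^t\theta)=N(\phi^t\theta)\times N(\phi^t\theta)$ (horizontal--vertical splitting) as the graphs of the symmetric operators $U_s(t)$ and $U_u(t)$, with $E^s(\phi^t\theta)\oplus E^u(\phi^t\theta)=S(\phi^t\theta)$. The first step is the elementary remark that $U_s(t)-U_u(t)$ is invertible for every $t$: if $(U_s(t)-U_u(t))w=0$ with $w\neq 0$, then $(w,U_s(t)w)=(w,U_u(t)w)$ would be a nonzero vector in $E^s(\phi^t\theta)\cap E^u(\phi^t\theta)=\{0\}$, a contradiction.

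The second step is to make this quantitative and uniform. By Bolton's result there is $\delta>0$ with $\angle\big(E^s(\eta),E^u(\eta)\big)\geq\delta$ for every $\eta\in SM$. A standard linear-algebra estimate (the oblique projection onto $E^s$ along $E^u$ has norm at most $1/\kappa$) then yields a constant $\kappa=\kappa(\delta)>0$ such that
\[
\|\xi+\eta\|_S\ \geq\ \kappa\,\|\xi\|_S\qquad\text{for all }\xi\in E^s(\eta'),\ \eta\in E^u(\eta'),\ \eta'\in SM .
\]
Apply this with $\eta'=\phi^t\theta$. Given a unit vertical vector $(0,w)\in\{0\}\times N(\phi^t\theta)$, set $x=(U_s(t)-U_u(t))^{-1}w$, so that $(0,w)=(x,U_s(t)x)-(x,U_u(t)x)$ with $(x,U_s(t)x)\in E^s(\phi^t\theta)$ and $(x,U_u(t)x)\in E^u(\phi^t\theta)$. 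Since $\|(x,U_s(t)x)\|_S^2=\|x\|^2+\|U_s(t)x\|^2\geq\|x\|^2$ and $\|(0,w)\|_S=1$, the displayed inequality gives $\|x\|\leq 1/\kappa$. As $w$ was an arbitrary unit vector, $\|(U_s(t)-U_u(t))^{-1}\|\leq 1/\kappa$, equivalently
\[
\|(U_s(t)-U_u(t))y\|\ \geq\ \kappa\,\|y\|\qquad\text{for all }y\in N(\phi^t\theta),\ t\in\mathbb{R}.
\]

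To finish, fix $\theta$ and a unit perpendicular parallel field $V(t)$; under the identification of Section~\ref{Not_Pre} it corresponds to a fixed unit vector $\bar V\in\mathbb{R}^{n-1}$, and $U_s(V(t))$, $U_u(V(t))$ are $U_s(t)\bar V$, $U_u(t)\bar V$. Taking $y=\bar V$ above and using the triangle inequality,
\[
\|U_s(V(t))\|+\|U_u(V(t))\|\ \geq\ \|(U_s(t)-U_u(t))\bar V\|\ \geq\ \kappa ,
\]
whence $\|U_s(V(t))\|^2+\|U_u(V(t))\|^2\geq\tfrac12\big(\|U_s(V(t))\|+\|U_u(V(t))\|\big)^2\geq \kappa^2/2$, so $D=\kappa^2/2$ works. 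The only genuinely non-routine point is the passage from the geometric statement ``the two Lagrangian sub-bundles meet at angle bounded below'' to the analytic statement ``$U_s(t)-U_u(t)$ is \emph{uniformly} invertible'', i.e.\ checking that the resulting constant is uniform in both $\theta$ and $t$; this is exactly where one uses that Bolton's angle bound holds at every point $\phi^t\theta$ of the orbit, not merely at $\theta$. Green's bound $\|U_s(t)\|,\|U_u(t)\|\leq c$ is not needed for the argument, although it ensures that all quantities involved are finite.
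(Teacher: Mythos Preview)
Your proof is correct, but it reaches the conclusion by a different mechanism than the paper does. The paper works directly with the two explicit vectors $(V(t),U_s(t)V(t))\in E^s(\phi^t\theta)$ and $(V(t),U_u(t)V(t))\in E^u(\phi^t\theta)$: it writes the cosine of their Sasaki angle as
\[
\frac{|1+\langle U_s V,\,U_u V\rangle|}{\sqrt{1+\|U_s V\|^2}\,\sqrt{1+\|U_u V\|^2}}\ \leq\ \cos\delta
\]
via Bolton, fixes $0<D<1$ with $(1-D)/(1+D)>\cos\delta$, and shows by a three-line contradiction (using Cauchy--Schwarz) that $\|U_sV\|^2+\|U_uV\|^2<D$ would force this quotient above $\cos\delta$. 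You instead decompose the unit \emph{vertical} vector $(0,w)$ along $E^s\oplus E^u$ and use the angle bound as an oblique-projection estimate, obtaining the stronger operator-level fact $\|(U_s(t)-U_u(t))^{-1}\|\leq 1/\kappa$ uniformly in $t$ and $\theta$; the lemma then drops out of the triangle inequality. Your route yields a bit more (uniform invertibility of $U_s-U_u$, which can be useful elsewhere) at the cost of invoking the projection-norm/angle relation; the paper's argument is more hands-on and entirely self-contained. Both are short and rest on exactly the same input, Bolton's uniform angle bound applied at every point $\phi^t\theta$ of the orbit.
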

\begin{proof}
By Bolton's results (see \cite{Bol:79}) there exists a positive constant $\delta$ such that the angle between $E^s(\phi^{t}(\theta))$ and
$E^{u}(\phi^{t}(\theta))$ is greater than $\delta$. Using the identification defined above, observe that $(V(t), U^s(V(t)) \in E^s(\phi^{t}(\theta))$ and $(V(t), U^u(V(t)) \in E^u(\phi^{t}(\theta))$ (see Subsection \ref{NCP}). Thus, using the definition of the Sasaki metric and  definition of angle, we have for each $t\in \re$
\begin{equation}\label{cos}
    |1 + \langle   U^s(V(t))   , U^u(V(t)) \rangle|       \leq \sqrt{1 +||U_{s}(V(t))||^2}\sqrt{1 +||U_{u}(V(t))||^2 }               \cos \delta.          
\end{equation}
Fix $ 0 < D < 1$ such that $\displaystyle\frac{1-D}{1+D} > \cos \delta$, we claim that   
 $$||U_{s}(V(t))||^2 + ||U_{u}(V(t))||^2 \geq D, \ \ \text{for all} \ \ t\in \re. $$
 In fact, by contradiction, suppose that $||U_{s}(V(t))||^2 + ||U_{u}(V(t))||^2 < D$, then  
\begin{itemize}
\item $||U_{s}(V(t))||^2 < D,$
\item $||U_{u}(V(t))||^2 < D,$ 
\item $||U_{s}(V(t))|| \cdot ||U_{u}(V(t))|| < D$.
\end{itemize}  
Thus, from (\ref{cos})
$$  1 - | \langle   U^s(V(t))   , U^u(V(t)) \rangle |   \leq (1 + D) \cos \delta.                              $$
By Cauchy-Schwarz inequality it follows that
 $$\displaystyle\frac{1-D}{1+D} \leq \cos \delta,$$
which is a contradiction by the choice of $D$. Therefore,
 $$||U_{s}(V(t))||^2 + ||U_{u}(V(t))||^2 \geq D.$$	
\end{proof}
We use the Lemma \ref{L1-main} to show the Theorem \ref{main}. Before, we set the notation use in the Theorem \ref{main}.
\begin{R}
For each $\theta\in SM$ and  for any unit perpendicular parallel vector field $V(t)$ along $\gamma_{\theta}$, we denote by $k(V(t))$ the sectional curvature of the subspace spanned by $\gamma'_{\theta}(t)$ and $ V(t)$.
\end{R}

\begin{proof}[\emph{\textbf{Proof of Theorem \ref{main}}}]
Fix $\theta \in SM$, and consider an unit perpendicular parallel vector field $V(t)$ along $\gamma_{\theta}$. Since the operators $U_{s}$ and $U_{u}$ are symmetric and satisfy the equation (\ref{Ricatti}) follows that
$$   \langle U_{s}(V(t)), V(t) \rangle' +    ||U_{s}(V(t))||^2 + k(V(t)) = 0,                                        $$
$$   \langle U_{u}(V(t)), V(t) \rangle' +    ||U_{u}(V(t))||^2 + k(V(t)) = 0.                                        $$
Integrating the sum of the above equations, we get
\begin{eqnarray*}
      \displaystyle\sum_{*= s,u} (\langle U_{*}(V(t)), V(t) \rangle  -\langle U_{*}(V(0)), V(0) \rangle  ) & +& \displaystyle\int_{0}^{t}  ||U_{s}(V(r))||^2  + ||U_{u}(V(r))||^2  \ dr \\
     &+& 2  \displaystyle\int_{0}^{t}    k(V(r)) \ dr = 0.                                
\end{eqnarray*}
Observe that by {inequality}  (\ref{U-bounded}), $\parallel U_{u}(t)  \parallel \leq c$ and  $\parallel U_{s}(t)  \parallel \leq c$ for all $t \in \mathbb{R}$, which allows to state that 

$$\displaystyle\lim_{t \to + \infty} \displaystyle\frac{1}{t}  \displaystyle\sum_{*= s,u} (\langle U_{*}(V(t)), V(t) \rangle - \langle U_{*}(V(0)), V(0) \rangle  ) = 0.$$
Thus, from the Lemma \ref{L1-main} follows that, there exists $t_0> 0$ such that
$$\displaystyle\frac{1}{t}\displaystyle\int_{0}^{t} k(V(r)) \ dr   \leq -\displaystyle\frac{D}{2}, \ \ \text{whenever} \ \ t > t_0.$$
Taking $B=\frac{D}{2}$, we conclude the proof of theorem.
\end{proof}

\subsection{Consequences of Theorem \ref{main}}
\noindent In the follows, we prove some important consequences of Theorem \ref{main}.\\
\noindent We denote by $\emph{Ric}_{p}(v)$  the Ricci curvature in the direction $v$, which is defined in the follows way: consider  a orthogonal basis of $T_pM$, $\{v,v_1,v_2,\dots, v_{n-1}\}$ , then  $$\emph{Ric}_{p}(v)=\frac{1}{n-1}\sum_{i=1}^{n-1} \langle R(v,v_j)v,v_j \rangle.$$
In other words, $\emph{Ric}_{p}(v)$ is the average of the sectional curvature in planes generates by $v$ and $v_j$.
In particular, as $\emph{R}_{p}(v)$ is the trace of the matrix of curvature, then $\emph{R}_{p}(v)$ does not depend of the orthonormal set
$\{v_1,v_2,\dots, v_{n-1}\}$. Thus, for $(p,v)\in SM$, we denote by  $\emph{Ric}(p,v)=\emph{Ric}_{p}(v)$ the Ricci curvature, which is a
function of $SM$ on the real line. As an immediate consequence of Theorem \ref{main}, we have 

\begin{C}\label{Cor4}
Let $M$ be a complete manifold with curvature bounded below by $-c^2$ without conjugate points and whose geodesic flow is Anosov. Then
there are two constants $B,t_0>0$ such that for any $\theta \in SM$  we have 
\begin{equation}\label{E1Cor4}
\ds\frac{1}{t}\int_{0}^{t} Ric_{\gamma_{\theta}(r)} (\gamma'_{\theta}(r))\ dr   \leq -B,
\end{equation}
whenever $t > t_0$.
\end{C}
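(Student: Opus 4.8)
The plan is to deduce Corollary \ref{Cor4} directly from Theorem \ref{main} by averaging inequality (\ref{E1main}) over an orthonormal frame. Fix $\theta=(p,v)\in SM$ and let $B,t_0>0$ be the constants produced by Theorem \ref{main}. Choose an orthonormal basis $\{v,v_1,\dots,v_{n-1}\}$ of $T_pM$ and, for each $j$, let $V_j(t)$ be the parallel vector field along $\gamma_\theta$ with $V_j(0)=v_j$. Since parallel transport is a linear isometry, the fields $\gamma'_\theta(t),V_1(t),\dots,V_{n-1}(t)$ remain orthonormal for all $t$, and each $V_j$ is a unit perpendicular parallel field along $\gamma_\theta$, so Theorem \ref{main} applies to each: for $t>t_0$,
$$\frac{1}{t}\int_0^t k(V_j(r))\,dr\le -B.$$

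The key identity is that, for fixed $r$, the quantity $k(V_j(r))$ is exactly the sectional curvature of the plane spanned by $\gamma'_\theta(r)$ and $V_j(r)$, i.e. $k(V_j(r))=\langle R(\gamma'_\theta(r),V_j(r))\gamma'_\theta(r),V_j(r)\rangle$ because these are unit orthogonal vectors. Therefore, since $\{\gamma'_\theta(r),V_1(r),\dots,V_{n-1}(r)\}$ is orthonormal, the definition of Ricci curvature gives
$$\mathrm{Ric}_{\gamma_\theta(r)}(\gamma'_\theta(r))=\frac{1}{n-1}\sum_{j=1}^{n-1}k(V_j(r)),$$
which is the pointwise average of the $k(V_j(r))$ and is independent of the chosen frame. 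Next I would average the $n-1$ inequalities above: summing over $j$, dividing by $n-1$, and interchanging the finite sum with the integral (legitimate, the integrand being continuous) yields
$$\frac{1}{t}\int_0^t \mathrm{Ric}_{\gamma_\theta(r)}(\gamma'_\theta(r))\,dr=\frac{1}{n-1}\sum_{j=1}^{n-1}\frac{1}{t}\int_0^t k(V_j(r))\,dr\le \frac{1}{n-1}\sum_{j=1}^{n-1}(-B)=-B,$$
valid whenever $t>t_0$. This is precisely (\ref{E1Cor4}) with the same constants $B$ and $t_0$, completing the proof.

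There is essentially no obstacle here: the only point requiring a word of care is to note that the constants $B$ and $t_0$ in Theorem \ref{main} are uniform in $\theta$ and in the choice of unit perpendicular parallel field, so that applying the theorem simultaneously to $V_1,\dots,V_{n-1}$ costs nothing, and that the averaging preserves the bound since the arithmetic mean of numbers all $\le -B$ is $\le -B$. One should also observe that the hypotheses of Corollary \ref{Cor4} (curvature bounded below by $-c^2$, no conjugate points, Anosov geodesic flow) are exactly those of Theorem \ref{main}, so nothing further is needed.
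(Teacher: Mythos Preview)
Your proof is correct and is exactly the intended argument: the paper states Corollary \ref{Cor4} as ``an immediate consequence of Theorem \ref{main}'' without writing out a proof, and your averaging over an orthonormal frame of parallel fields is precisely how one makes that immediacy explicit. The only thing to note is that the uniformity of $B$ and $t_0$ in $\theta$ and in the choice of field---which you correctly flag---is built into the statement of Theorem \ref{main}.
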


Let us to prove Corollary \ref{Cor1} using Corollary \ref{Cor4} and Birkhoff's ergodic theorem.

\begin{proof}[\emph{\textbf{Proof of Corollary \ref{Cor1}}}]
As $M$ is a complete manifold with curvature bounded below by $-c^2$ whose geodesic flow is Anosov, then the condition of finite volume gives us,
thanks to Ma\~n\'e result(cf. \cite{man:87}), that $M$ has no conjugate points. 	Therefore, $M$ is a manifold without conjugate
points and whose negative part of the Ricci curvature in integrable (with respect to Liouville measure) on $SM$, then a Guimar\~aes result
(cf. \cite{Gui:92}) implies that the Ricci curvature is integrable  on $SM$. Moreover, for each $\theta\in SM$ the equation (\ref{E1Cor4})
can be writen as 
$$\ds\frac{1}{t}\int_{0}^{t} Ric({\phi^t(\theta)}) \ dr   \leq -B, \ \text{whenever} \, \, t>t_0.$$
As the Liouville measure, is invariant by the geodesic flow, then the Birkhoff ergodic theorem, applied to the Ricci curvature, provides
us 
$$-c^2\cdot \mu\,(SM)<\int_{SM} Ric \ d \mu<-B\cdot\mu\,(SM)<0,$$
which concludes the proof of Corollary \ref{Cor1}.
\end{proof}
\begin{R} In \emph{\cite{Gui:92}}, Guimar\~aes proved that: If $M$ is a  manifold without conjugate points with the positive or negative part of 
the Ricci curvature integrable, then $$\int_{SM}Ric\, d\mu\leq 0,$$
where the equality holds only if the curvature tensor of $M$    is identically zero. Thus, by \emph{Theorem \ref{main}},  manifold of
zero curvature has no geodesic flow of Anosov type, which implies that in the Anosov case should be $\int_{SM}Ric\, d\mu<0$. Obtaining  
another proof of \emph{Corollary \ref{Cor1}}. 
\end{R}
It is well known that a compact surface $M$ with non-negative Euler characteristic $\chi(M)$, does not admit Riemaniann metric whose geodesic
flow is of Anosov Type.  In fact, if the geodesic flow of a compact surface is Anosov, then  the surface has no conjugate points 
(cf. \cite{kli:74}). Thus, by a Hopf's result (cf. \cite{Hopf:47}) the integral of the Gaussian curvature is non-positive and zero in the case of zero curvature. Therefore, as in the Anosov case the curvature is not zero everywhere (cf. \cite{Ebe:73}), then  the Gauss-Bonnet Theorem  implies that if the geodesic flow is Anosov, then $\chi(M)<0$.\\
For non-compact surface of finite volume the Gauss-Bonnet theorem holds (cf. \cite{Ros:82}), then as a consequence of Corollary 
\ref{Cor1} we have
 \begin{C}\label{Cor6}
Any complete surface of finite volume, curvature bounded below and non-negative Euler characteristic does not admit a Riemannian metric whose  geodesic flow is Anosov.  
 \end{C}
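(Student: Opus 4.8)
The plan is to argue by contradiction: assume $M$ is a complete surface of finite volume with curvature bounded below by $-c^2$ and non-negative Euler characteristic $\chi(M)\geq 0$, and suppose that $M$ carries a Riemannian metric whose geodesic flow is Anosov. First I would invoke Ma\~n\'e's theorem (cf. \cite{man:87}): since $M$ has finite volume and its geodesic flow is Anosov, $M$ has no conjugate points, so the hypotheses of Corollary \ref{Cor1} are satisfied. Applying Corollary \ref{Cor1} then yields
\[
\int_{SM}\mathrm{Ric}\,d\mu \;<\; -B\cdot\mu(SM) \;<\; 0 .
\]

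Next I would translate this into a statement about the Gaussian curvature on the base $M$. In dimension two the Ricci curvature at a point $(p,v)\in SM$ coincides with the sectional (Gaussian) curvature $k(p)$ and in particular does not depend on the direction $v$; moreover the Liouville measure disintegrates over $M$ along the fibers $S_pM$ as $d\mu = d\theta\,dA$, each fiber having total mass $2\pi$. Hence $\int_{SM}\mathrm{Ric}\,d\mu = 2\pi\int_M k\,dA$, the integrability of $k$ over $M$ being guaranteed by the integrability of $\mathrm{Ric}$ on $SM$ already established in the proof of Corollary \ref{Cor1} (so that $\int_M|k|\,dA<\infty$). Combining with the inequality above gives $\int_M k\,dA < 0$.

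Finally, since $M$ is a complete surface of finite volume, the Gauss--Bonnet theorem holds in the form $\int_M k\,dA = 2\pi\,\chi(M)$ (cf. \cite{Ros:82}). Together with $\chi(M)\geq 0$ this forces $\int_M k\,dA \geq 0$, contradicting $\int_M k\,dA < 0$. Therefore no such Riemannian metric can exist, which proves the corollary.

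I expect the only delicate point to be the precise form and applicability of Gauss--Bonnet for non-compact surfaces of finite volume: one must make sure that \cite{Ros:82} gives the \emph{equality} $\int_M k\,dA = 2\pi\chi(M)$ (and not merely the Cohn--Vossen inequality $\int_M k\,dA \leq 2\pi\chi(M)$, which would not close the argument), and that the relevant hypotheses there (finite volume, curvature bounded below, finite topological type) are met in our setting. The remaining steps --- Ma\~n\'e's theorem, Corollary \ref{Cor1}, and the disintegration of the Liouville measure --- are routine.
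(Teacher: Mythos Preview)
Your proposal is correct and follows essentially the same route as the paper: the paper simply states that Corollary~\ref{Cor6} is a consequence of Corollary~\ref{Cor1} together with the Gauss--Bonnet theorem for surfaces of finite volume (citing \cite{Ros:82}), which is exactly your argument spelled out in detail. Your caveat about needing the \emph{equality} in Gauss--Bonnet rather than the Cohn--Vossen inequality is well placed and is precisely the point the paper is implicitly relying on via \cite{Ros:82}.
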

In particular, if $S$ is a sphere, its Euler Characteristic equal to $2$, then no complete surface $M$ of finite volume
homeomorphic to $S$ with a point or two points deleted, has Anosov geodesic flow, since $\chi(M)=\chi(S)-1=1>0$ or $\chi(M)=\chi(S)-2=0$, see figure below.
\begin{figure}[htbp]
	\centering
	\subfloat[$\chi(M)=1$]{
		\includegraphics[height=4cm]{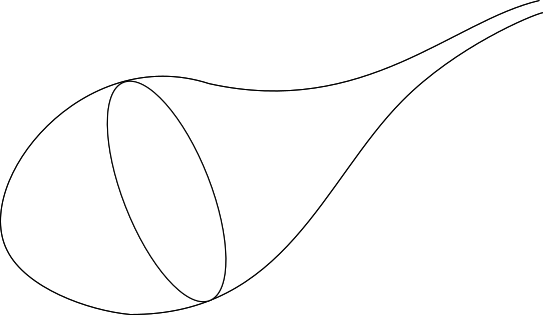}
		}
	\quad 
		\subfloat[$\chi(M)=0$]{
		\includegraphics[height=4cm]{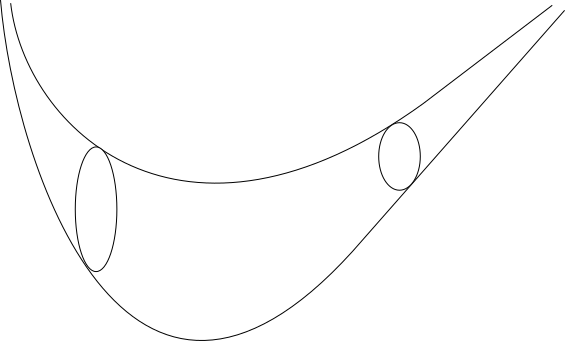}
		}
\end{figure}

\begin{proof}[\emph{\textbf{Proof of Corollary \ref{Cor2}}}]
Let us to  prove by contradiction. Assume that the geodesic flow is an Anosov flow, then by Theorem \ref{main} there are $B,t_0>0$ which satisfies (\ref{E1main}). Let $t>t_1$, where $t_1$ is as our hypotheses. Therefore,

\begin{equation}\label{e1Cor2}
\frac{1}{t}\int_{0}^{t}k(V(t))dt=\frac{1}{t}\int_{0}^{t_1}k(V(t))dt+\frac{1}{t}\int_{t_1}^{t}k(V(t))dt.
\end{equation}
Given any  $\delta<B$, we taken $t>\max\{t_0, t_1\}$ large enough such that\break $-\delta<\ds \frac{1}{t}\int_{0}^{t_1}k(V(t))dt<\delta$. Thus
by our hypotheses  $\ds \frac{1}{t}\int_{t_1}^{t}k(V(t))dt\geq 0$. Therefore, by equation (\ref{e1Cor2}) and the chosen of $t$, we have 
$-B>-\delta$, which is a contradiction. Thus, we concludes the proof of the first part of corollary. {To prove, the second part, note that 
in a manifold $M=N\times O$ endowed with the product metric, it is possible to construct a parallel perpendicular vector field along of a 
geodesic $\gamma$, totally contains in $N$ or $M$. Follows then, by the first part of corollary, that the geodesic flow of $M$ is not
Anosov.}
\end{proof}

We end this section with the proof of Corollary \ref{Cor3}.
\begin{proof}[\emph{\textbf{Proof of Corollary \ref{Cor3}}}]
It is known that non-compact manifold has rays, \emph{i.e.}, there exists $\theta=(p,v) \in SM$ such that the geodesic  
$\gamma_{\theta}:[0, \infty) \to M$ satisfies $d(\gamma_{\theta}(t),\gamma_{\theta}(s))=|s-t|$. Since $\mathcal{K}(t) \to 0$ as $t \to \infty$ , then $Ric_{\gamma_{\theta}(t)} (\gamma'_{\theta}(t))\to 0$ as $t \to \infty$. In particular, 
$$ \lim_{t\to +\infty} \Big| \displaystyle\frac{1}{t}\displaystyle\int_{0}^{t} Ric_{\gamma_{\theta}(r)} (\gamma'_{\theta}(r))\ dr \Big|=0.$$
Therefore, by Corollary \ref{Cor4},  follows that the geodesic flow of $M$ is not Anosov.
\end{proof}

At the end of Section \ref{Sec_Examples}, we construct a non-compact surface of negative curvature whose geodesic flow is not Anosov, showing that, from the point view of the dynamic, manifold of negative curvature and manifold of pinched negative curvature are totally different.   

\section{Proof of Theorem \ref{main2} }\label{Nec_Cond}
The main goal of this section is to prove  Theorem \ref{main2}.  The idea of the proof is to use the hypothesis of Theorem \ref{main2} to
show that  the stable  Jacobi field  has norm exponential decreasing for the future, and unstable Jacobi field  has norm exponential
increasing for the future (see Proposition \ref{P1main2}). Then, we use a strategy similar to Eberlein at \cite{Ebe:73}, to show the
uniform contraction of the stable and unstable bundle. \\

\begin{T}\label{main2*}
Let $M$ be a complete surface with curvature bounded below by $-c^2$ without focal points. Assume that, there are two constants $B, t_0>0$ such that for all geodesic $\gamma(t)$
\begin{equation}\label{e1main2}
\frac{1}{t}\int_{0}^{t} k(\gamma(s)) \ dr   \leq -B \ \ \text{whenever} \ \ t > t_0,
\end{equation} 
then the geodesic flow is an Anosov flow.
\end{T}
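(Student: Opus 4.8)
The goal is to show that, for a complete surface $M$ with curvature $\geq -c^2$ and no focal points, the uniform average-curvature condition $\frac{1}{t}\int_0^t k(\gamma(s))\,ds \leq -B$ for $t>t_0$ forces the geodesic flow to be Anosov. Since $M$ has no focal points (hence no conjugate points), the stable and unstable Green subbundles $E^s(\theta) = \mathrm{graph}\,U^s(\theta)$ and $E^u(\theta) = \mathrm{graph}\,U^u(\theta)$ exist at every $\theta \in SM$, $S(\theta) = E^s(\theta)\oplus E^u(\theta)$ would need to be established, and $U^s, U^u$ are scalar functions (dimension two!) bounded by $c$ that solve the scalar Riccati equation $u' + u^2 + k(\gamma_\theta(t)) = 0$. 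The whole argument reduces to: (1) prove that along each geodesic the stable Jacobi field decays exponentially in forward time and the unstable one grows exponentially in forward time, with uniform constants; (2) upgrade this to the uniform hyperbolicity estimates $\|d\phi^t|_{E^s}\| \leq C\lambda^t$ etc.; and (3) conclude the splitting is continuous and $d\phi$-invariant, i.e.\ the flow is Anosov.

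\textbf{Step 1: exponential behavior of Green Jacobi fields (Proposition \ref{P1main2}).} Let $J^s$ be the stable Jacobi field along $\gamma_\theta$, so $\|J^s(t)\| = \|J^s(0)\|\exp\!\big(\int_0^t U^s(\gamma_\theta(r))\,dr\big)$, and similarly for $J^u$ with $U^u$. The key is to show $\int_0^t U^s(\gamma_\theta(r))\,dr \leq -at + b$ for uniform $a,b>0$. From the Riccati equation, integrating $u' = -u^2 - k$ gives $\int_0^t u^2(r)\,dr = -\int_0^t k(\gamma_\theta(r))\,dr + u(0) - u(t) \geq Bt - 2c$ for $t > t_0$ (using the hypothesis and $|u|\leq c$). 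So $U^s$ has uniformly large $L^2$-average. Since $|U^s|\leq c$, a large $L^2$-average forces $U^s$ to be negative on a set of substantial measure; combined with the no-focal-points sign information on $U^s$ (in a surface without focal points, $U^s \leq 0$ and $U^u \geq 0$ — this should be recalled/verified), one gets $\int_0^t U^s \leq -\tilde{a}t + \tilde b$. The cleanest route: $U^s \leq 0$ means $\int_0^t U^s = -\int_0^t |U^s| \leq -\frac{1}{c}\int_0^t (U^s)^2 \leq -\frac{1}{c}(Bt - 2c)$, giving genuine exponential decay $\|J^s(t)\| \leq \|J^s(0)\|\, e^{2}\, e^{-(B/c)t}$ for $t>t_0$ (and a uniform bound on $[0,t_0]$ since $|U^s|\le c$). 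The same computation with $U^u\geq 0$ gives exponential growth of $\|J^u\|$ in forward time. This step I expect to be the technical heart, and the main obstacle is pinning down the correct sign/monotonicity properties of $U^s,U^u$ in the no-focal-points surface case and making sure the constants are uniform in $\theta$.

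\textbf{Step 2 and 3: from Jacobi estimates to Anosov.} Following Eberlein's strategy in \cite{Ebe:73}: a vector $\xi = (w, U^s(\theta)w) \in E^s(\theta)$ has $d\phi^t_\theta(\xi) = (J^s_\xi(t), (J^s_\xi)'(t))$; since $|U^s(\phi^t\theta)|\leq c$, the Sasaki norm of $d\phi^t_\theta\xi$ is comparable to $\|J^s_\xi(t)\|$, so Step 1 yields $\|d\phi^t_\theta|_{E^s}\| \leq C\lambda^t$ with $\lambda = e^{-B/c} < 1$ and $C$ uniform. For $E^u$ one argues with $\phi^{-t}$: the unstable field's forward growth is backward decay, giving $\|d\phi^{-t}_\theta|_{E^u}\| \leq C\lambda^t$. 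It remains to check $E^s \oplus \langle G\rangle \oplus E^u = T(SM)$ and continuity of the splitting: transversality $E^s(\theta)\cap E^u(\theta) = \{0\}$ follows because $U^s < 0 < U^u$ (so the two graphs are distinct lines) — here one may need the strict inequality, which again comes from the no-focal-points hypothesis together with the fact that the curvature is not identically zero along any geodesic (forced by condition \eqref{e1main2}); continuity of $\theta \mapsto U^{s,u}(\theta)$ is a standard consequence of uniform convergence of the approximating solutions $Y_{\theta,r}$ (Green's construction), and the contraction estimates hold on the whole of $SM$ with uniform $C,\lambda$. Assembling these gives the hyperbolic splitting, hence $\phi^t$ is Anosov. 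I would present Step 1 as a standalone Proposition and then devote a short paragraph each to Steps 2 and 3, citing Eberlein and the preliminaries for the routine parts.
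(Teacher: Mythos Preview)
Your proposal is correct and follows essentially the same route as the paper: the paper's Proposition~\ref{P1main2} is exactly your Step~1 (Riccati plus the no-focal-points sign $U^s\le 0$, $|U^s|\le c$ to turn the lower bound on $\int (U^s)^2$ into an upper bound on $\int U^s$), and your Steps~2--3 match the paper's passage from Jacobi-field decay to the Sasaki-norm estimate and then to the Anosov splitting. Two minor points: the paper packages the uniform-in-$\theta$ exponential bound via a sub\-multiplicativity lemma (Lemma~\ref{L1main2}) rather than your direct argument, and for transversality you should not rely on the \emph{strict} inequality $U^s<0<U^u$ pointwise (which need not hold) but instead deduce $E^s\cap E^u=\{0\}$ directly from the exponential decay/growth you already established, exactly as the paper does.
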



The following proposition gives us control of the determinant of the matrix of stable and unstable Jacobi fields. 

\begin{Pro}\label{P1main2}
Let $M$ be a complete manifold with curvature bounded below by $-c^2$ without focal point. Assume that, there are two constants $B, t_0>0$ such that for all geodesic $\gamma(t)$ and for each unit perpendicular parallel vector field $V(t)$ on $\gamma(t)$  we  have 
\begin{equation}\label{e2main2}
\frac{1}{t}\int_{0}^{t} k(V(r)) \ dr   \leq -B \ \ \text{whenever} \ \ t>t_0.
\end{equation} 
Then there are two positive constants $E$ and $t_1$ such that 
\begin{itemize}
\item[$(a)$] $|\operatorname{det}\, Y_{\theta, u}(t) |\geq e^{Et} \ \ \text{for all} \ \ t>t_1$,
\item[$(b)$] $|\operatorname{det}\, Y_{\theta, s}(t) |\leq e^{-Et} \ \ \text{for all} \ \ t>t_1.$
\end{itemize}
\end{Pro}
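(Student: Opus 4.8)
The plan is to express $\log|\det Y_{\theta,u}(t)|$ and $\log|\det Y_{\theta,s}(t)|$ as integrals of traces of the associated Riccati solutions, and then to estimate those integrals using hypothesis (\ref{e2main2}) together with the boundedness (\ref{U-bounded}) and the sign of the Riccati solutions provided by the no-focal-points condition. After identifying $N(\phi^{t}(\theta))$ with $\mathbb{R}^{n-1}$ via a parallel orthonormal frame as in Subsection \ref{NCP}, recall that $Y_{\theta,u}(0)=I$, that $\det Y_{\theta,u}(t)\neq 0$ for every $t$ (so, being continuous and equal to $1$ at $t=0$, it is positive), and that $U^{u}(\phi^{t}(\theta))=\frac{DY_{\theta,u}}{Dt}(t)\,Y^{-1}_{\theta,u}(t)$; the same holds for $Y_{\theta,s}$. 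Jacobi's formula for the derivative of a determinant then gives $\frac{d}{dt}\log\det Y_{\theta,u}(t)=\operatorname{tr}\,U^{u}(\phi^{t}(\theta))$, and integrating,
$$\log\det Y_{\theta,u}(t)=\int_{0}^{t}\operatorname{tr}\,U^{u}(\phi^{r}(\theta))\,dr,\qquad \log\det Y_{\theta,s}(t)=\int_{0}^{t}\operatorname{tr}\,U^{s}(\phi^{r}(\theta))\,dr .$$
So $(a)$ reduces to $\int_{0}^{t}\operatorname{tr}\,U^{u}(\phi^{r}(\theta))\,dr\geq Et$ and $(b)$ to $\int_{0}^{t}\operatorname{tr}\,U^{s}(\phi^{r}(\theta))\,dr\leq -Et$, for $t$ large.

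Next I would take the trace of the Riccati equation (\ref{Ricatti}) satisfied by $U^{u}$ and $U^{s}$. Since $\operatorname{tr}\,R(r)=\sum_{i=1}^{n-1}k(V_{i}(r))$ for a parallel orthonormal frame $V_{1},\dots,V_{n-1}$, hypothesis (\ref{e2main2}) applied to each $V_{i}$ gives $\int_{0}^{t}\operatorname{tr}\,R(r)\,dr\leq -(n-1)Bt$ for $t>t_{0}$; and by (\ref{U-bounded}) the boundary term satisfies $|\operatorname{tr}\,U^{u}(\phi^{t}(\theta))-\operatorname{tr}\,U^{u}(\theta)|\leq 2(n-1)c$. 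Integrating $\operatorname{tr}\,U'+\operatorname{tr}(U^{2})+\operatorname{tr}\,R=0$ from $0$ to $t$ then yields
$$\int_{0}^{t}\operatorname{tr}\!\big((U^{u})^{2}\big)(\phi^{r}(\theta))\,dr=-\int_{0}^{t}\operatorname{tr}\,R(r)\,dr-\big(\operatorname{tr}\,U^{u}(\phi^{t}(\theta))-\operatorname{tr}\,U^{u}(\theta)\big)\geq (n-1)Bt-2(n-1)c ,$$
and the same estimate holds with $U^{u}$ replaced by $U^{s}$.

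Now comes the key point: because $M$ has no focal points, the symmetric operator $U^{u}(\phi^{r}(\theta))$ is positive semidefinite and $U^{s}(\phi^{r}(\theta))$ negative semidefinite — indeed the norm of an unstable (resp. stable) Jacobi field is non-decreasing (resp. non-increasing), which forces $\langle U^{u}w,w\rangle\geq 0$ (resp. $\langle U^{s}w,w\rangle\leq 0$). Hence the eigenvalues $\lambda_{i}(r)$ of $U^{u}(\phi^{r}(\theta))$ lie in $[0,c]$ by (\ref{U-bounded}), so $\operatorname{tr}((U^{u})^{2})=\sum_{i}\lambda_{i}^{2}\leq c\sum_{i}\lambda_{i}=c\,\operatorname{tr}\,U^{u}$ pointwise. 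Combining with the previous display,
$$c\int_{0}^{t}\operatorname{tr}\,U^{u}(\phi^{r}(\theta))\,dr\geq (n-1)Bt-2(n-1)c ,$$
so setting $E:=\frac{(n-1)B}{2c}$ and $t_{1}:=\max\{t_{0},\,4c/B\}$ we get $\log\det Y_{\theta,u}(t)=\int_{0}^{t}\operatorname{tr}\,U^{u}\geq Et$ for $t>t_{1}$, which is $(a)$. For $(b)$, the eigenvalues of $U^{s}(\phi^{r}(\theta))$ lie in $[-c,0]$, so $\operatorname{tr}((U^{s})^{2})\leq -c\,\operatorname{tr}\,U^{s}$, the inequality reverses, and $\log\det Y_{\theta,s}(t)=\int_{0}^{t}\operatorname{tr}\,U^{s}\leq -Et$ for $t>t_{1}$.

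I expect the delicate part to be the sign of the Riccati solutions $U^{u},U^{s}$: this is exactly where the no-focal-points hypothesis — as opposed to merely no conjugate points — is needed, and it is what makes the absorption $\operatorname{tr}((U^{u})^{2})\leq c\,\operatorname{tr}\,U^{u}$ available (without it one only controls $\int\operatorname{tr}((U^{u})^{2})$, not $\int\operatorname{tr}\,U^{u}$). A minor but necessary bookkeeping point is that $Y_{\theta,u},Y_{\theta,s}$ here must be Green's stable and unstable solutions (which exist under no conjugate points), since the Anosov property is not assumed in this proposition; everything invoked about them — the limits, $\det\neq 0$, the Riccati identity, and the bound $\|U^{u,s}\|\leq c$ — is already recorded in Subsection \ref{NCP}.
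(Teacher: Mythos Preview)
Your proof is correct and follows essentially the same route as the paper: integrate the trace of the Riccati equation, use hypothesis (\ref{e2main2}) to bound $\int_{0}^{t}\operatorname{tr}R$, use (\ref{U-bounded}) for the boundary term, and then exploit the no-focal-points sign condition on the eigenvalues of $U^{s},U^{u}$ to pass from $\int\operatorname{tr}(U^{2})$ to $\int\operatorname{tr}\,U$ via $\operatorname{tr}(U^{2})\leq c\,|\operatorname{tr}\,U|$, finishing with Jacobi's formula. The only cosmetic difference is that you keep the boundary constant $2(n-1)c$ explicit and compute $t_{1}=\max\{t_{0},4c/B\}$, whereas the paper absorbs it into an unspecified $t_{1}$; your constant $E=\frac{(n-1)B}{2c}$ coincides with the paper's.
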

\begin{proof}
We prove item (b). The proof of item (a) is analogue to item (b). Indeed, as $Y_{\theta,s}(t)$ is the stable tensor given by (\ref{E1Jac}) 
and  satisfies the Jacobi equation (\ref{Jacobi}). 
Thus, $U_{\theta, s}(t):=U_{s}(\phi^{t}(\theta))=Y'_{\theta,s}(t)\cdot Y^{-1}_{\theta,s}(t)$ is solution of the Ricatti equation 
(\ref{Ricatti}) (see Subsection \ref{NCP}). As $M$ has no focal points, then for each $x\in \re^n\setminus \{0\}$, we have that the function
$t\to |Y_{\theta,s}(t)x|^2$ is decreasing (cf. \cite{Ebe:73}), \emph{i.e.},
$$\frac{d}{dt}|Y_{\theta,s}(t)x|^2=2\langle Y_{\theta,s}(t)x , Y'_{\theta,s}(t)x \rangle\leq 0, \ \ \, x\in \re^{n-1}.$$
Therefore, as $Y_{\theta,s}(t)$ is invertible we have $$\langle y, U_{\theta,s}(t)y\rangle=\langle y, Y'_{\theta,s}(t)\cdot Y^{-1}_{\theta,s}(t)y\rangle\leq 0, \ \ \ y\in \re^{n-1}.$$ 
Since $U_{\theta,s}(t)$ is symmetric, the last equation implies that all eigenvalues of $U_{\theta,s}(t)$ are non-positive. 
Let $-\lambda_{n-1}(t)\leq -\lambda_{n-2}(t)\leq \cdots \leq -\lambda_{1}(t)\leq 0$ the eigenvalues of $U_{\theta,s}(t)$, then  as $|U_{\theta,s}(t)|\leq c$, then $0\leq \lambda_i(t)\leq c$, $i=1,2,\dots, n-1$ which provides 
\begin{eqnarray}\label{e2'main2}
\text{tr}\, (U_{\theta,s}(t))^{2}&=&\lambda^2_1(t)+\lambda^2_2(t)+\cdots +\lambda^2_{n-1}(t)\nonumber \\ 
&\leq & c(\lambda_1(t)+\lambda_2(t)+\cdots +\lambda_{n-1}(t))\nonumber\\  &=&-c \, \text{tr}\,U_{\theta,s}(t).
\end{eqnarray}
Taking trace in the equation (\ref{Ricatti}) and integrating we have 
\begin{eqnarray}\label{e3main2}
0 &=&\frac{1}{t}\int_{0}^{t}\text{tr} \,U'_{\theta,s}(r)dr + \frac{1}{t}\int_{0}^{t}\text{tr}\,U^2_{\theta,s}(r)dr+\frac{1}{t}\int_{0}^{t}\text{tr}\, R(r)dr \nonumber \\
&=&\frac{\text{tr} \, U_{\theta,s}(t)-\text{tr} \, U_{\theta,s}(0)}{t}+\frac{1}{t}\int_{0}^{t}\text{tr}\,U^2_{\theta,s}(r)dr+\frac{1}{t}\int_{0}^{t}\text{tr}\,R(r)dr. 
\end{eqnarray}
For $t>t_0$,  our hypothesis (equation (\ref{e2main2})) implies that $\ds\frac{1}{t}\int_{0}^{t}\text{tr}\,R(r)dr<-(n-1)\cdot B$, as $|\text{tr}\,U_{\theta,s}(r)|\leq (n-1)c$. {From equation}  (\ref{e3main2}) there is $t_1 > 0$ such that 
\begin{equation}\label{e4main2}
\ds\frac{1}{t}\int_{0}^{t}\text{tr}\,U^2_{\theta,s}(r)dr>\frac{(n-1)\cdot B }{2}, \ \  t>t_1.
\end{equation}
The equations (\ref{e2'main2}) and (\ref{e4main2}) provides 
\begin{equation}\label{e5main2}
\int_{0}^{t}\text{tr}\,U_{\theta,s}(r)dr\leq -\frac{(n-1)\cdot B}{2c}\, t:=-Et, \, \,\, \, t>t_1.
\end{equation}
To conclude the argument, we remember the Liouville's Formula or {Jacobi's Formula}, (see \cite[Lemma 4.6]{MelRom:pre} or \cite{ManFre:82}) which states that
\begin{equation}\label{e5'main2}
\frac{d}{dt}\log | \text{det}\, Y_{\theta,s}(r)|=\text{tr}\, U_{\theta,s}(r), \ \ Y_{\theta,s}(0)=I.
\end{equation}
 \noindent Integrating this last equation and using (\ref{e5main2}) we obtain 
\begin{equation}\label{e6main2}
| \text{det}\, Y_{\theta,s}(t)|\leq e^{-Et}, \ \ \ t>t_1,
\end{equation}
which concludes the proof of proposition.
\end{proof} 
\begin{C}\label{C1main2}
In the same conditions of \emph{Proposition \ref{P1main2}}, there exists $t_2 > 0$ such that for each $\theta\in SM$ we have $$\frac{1}{t}\log |\text{det}\, D\phi^t|_{E^s(\theta)}|\leq -\frac{E}{2} \ \ \text{and} \ \ \frac{1}{t}\log |\text{det}\, D\phi^t|_{E^u(\theta)}|\geq \frac{E}{2}, \ \ \text{whenever} \ \ {t>t_2}.$$
\end{C}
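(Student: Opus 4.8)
The plan is to relate the determinant of $d\phi^t$ restricted to the stable (resp. unstable) subbundle to the determinant of the Jacobi tensor $Y_{\theta,s}(t)$ (resp. $Y_{\theta,u}(t)$), and then invoke Proposition \ref{P1main2}. First I would recall that, since $E^s(\theta)$ is Lagrangian and transverse to the vertical subbundle (Subsection \ref{NCP}), it is the graph of the symmetric operator $U^s(\theta)$, and $d\phi^t$ carries $E^s(\theta)$ to $E^s(\phi^t(\theta))$ acting on a vector $(w, U^s(\theta)w)$ by $(Y_{\theta,s}(t)w, Y'_{\theta,s}(t)w)$, where $Y_{\theta,s}$ is the stable Jacobi tensor normalized by $Y_{\theta,s}(0) = I$ (using $E^s(\theta) = \operatorname{graph} U^s(\theta)$ and $U^s = Y'_{\theta,s}Y_{\theta,s}^{-1}$). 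Hence, in the coordinates on $E^s$ given by the first (horizontal) factor, the restricted map $d\phi^t|_{E^s(\theta)}$ is represented, up to the change of basis identifying $E^s(\theta)$ and $E^s(\phi^t(\theta))$ with $N(\theta)$ and $N(\phi^t(\theta))$ via projection, precisely by the matrix $Y_{\theta,s}(t)$. The only subtlety is that this identification is itself an isometry-bounded-distortion map: the Sasaki norm on $E^s(\phi^t(\theta))$ compared to the norm on $N(\phi^t(\theta))$ differs by a factor controlled by $\|U^s(\phi^t(\theta))\| \le c$ (equation (\ref{U-bounded})). So $|\det d\phi^t|_{E^s(\theta)}|$ and $|\det Y_{\theta,s}(t)|$ differ by a multiplicative factor lying in a fixed compact interval $[m, M] \subset (0,\infty)$ depending only on $c$ and $n$.

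Given that, the argument is immediate: by part (b) of Proposition \ref{P1main2}, $|\det Y_{\theta,s}(t)| \le e^{-Et}$ for $t > t_1$, so
\[
\frac{1}{t}\log |\det d\phi^t|_{E^s(\theta)}| \le \frac{1}{t}\log M - E,
\]
and choosing $t_2 \ge t_1$ large enough that $\frac{1}{t}\log M < \frac{E}{2}$ for all $t > t_2$ gives $\frac{1}{t}\log|\det d\phi^t|_{E^s(\theta)}| \le -\frac{E}{2}$. The unstable estimate is symmetric: using part (a), $|\det Y_{\theta,u}(t)| \ge e^{Et}$, and $|\det d\phi^t|_{E^u(\theta)}| \ge m\, e^{Et}$, so enlarging $t_2$ if necessary so that $\frac{1}{t}\log m > -\frac{E}{2}$ yields $\frac{1}{t}\log|\det d\phi^t|_{E^u(\theta)}| \ge \frac{E}{2}$ for $t > t_2$. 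Taking the larger of the two thresholds finishes the proof.

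The main obstacle I anticipate is making the comparison in the first paragraph fully rigorous and \emph{uniform in $\theta$}: one must check that the linear isomorphism $d_{\phi^t(\theta)}\pi|_{E^s(\phi^t(\theta))}\colon E^s(\phi^t(\theta)) \to N(\phi^t(\theta))$ has determinant (as a map between these two inner-product spaces) bounded away from $0$ and $\infty$ by constants depending only on $n$ and $c$ — this is where the bound $\|U^s\|, \|U^u\| \le c$ from (\ref{U-bounded}) is essential, since the distortion of $\operatorname{graph} U$ relative to its base is governed by the singular values of $I + (U^s)^* U^s$, all of which lie in $[1, 1+c^2]$. Once this uniform two-sided bound is in hand, everything else is the routine logarithmic asymptotics above, and the statement follows with $t_2$ chosen in terms of $E$, $c$, $n$, and $t_1$.
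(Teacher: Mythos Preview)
Your proposal is correct and follows essentially the same route as the paper: the paper writes the factorization $d\phi^t|_{E^{s(u)}(\theta)} = \pi^{-1}_{\phi^t(\theta),s(u)}\circ Y_{\theta,s(u)}(t)\circ \pi_{\theta,s(u)}$ explicitly and quotes the uniform two-sided bound $1\le |\det \pi^{-1}_{\phi^t(\theta)}|\le (1+c^2)^{(n-2)/2}$ (exactly the distortion estimate you describe via the singular values of $I+(U^s)^*U^s$ and the bound $\|U^{s(u)}\|\le c$), then combines this with Proposition~\ref{P1main2} and absorbs the $O(1/t)$ term by choosing $t_2$ large. Your anticipated ``main obstacle'' is precisely the content of the paper's equations (\ref{e7main2})--(\ref{e8main2}), so there is no gap.
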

\begin{proof}
Following the same lines of Lemma 4.6 from {\cite{MelRom:pre}}, consider  for each $\theta\in SM$  the subspace $N(\theta)$ of $T_pM$
orthogonal to $v$. Then\\
${E^{s(u)}({\theta})}=\text{graph} \,  U_{\theta,s(u)}=\{(x,U_{\theta,s(u)}x):x\in N(\theta)\}$ and
\begin{equation}\label{e7main2}
d\phi^{t}|_{E^{s(u)}({\theta})}=\pi^{-1}_{\phi^t(\theta),s(u)}\circ Y_{\theta,s(u)}(t)\circ \pi_{\theta,s(u)},
\end{equation} 
where $\pi_{\theta,s(u)}\colon E^{s(u)}(\theta)\to N(\theta)$ is the projection in the first coordinate. 
This projection satisfies  (see \cite[equation 4.12]{MelRom:pre})
\begin{equation}\label{e8main2}
1\leq |\text{det}\,\pi^{-1}_{\phi^t(\theta)} |\leq (1+c^2)^\frac{n-2}{2}.
\end{equation}
The equations (\ref{e7main2}) and (\ref{e8main2}) and Proposition \ref{P1main2} provides that there exists $t_2 >0$ such that\\

\noindent $\bullet$  \textbf{Stable case:}
\begin{eqnarray}\label{e9'main2}
\frac{1}{t}\log |\text{det}\, D\phi^t|_{E^s(\theta)}|&=&\frac{1}{t}\log |\text{det}\pi^{-1}_{\phi^{t}(\theta),s}|+\frac{1}{t}\log | \text{det}\, Y_{\theta,s}(t)|+\frac{1}{t}\log |\text{det}\,\pi_{\theta}| \\ \label{e9main2}
&\leq & \frac{(1+c^2)^\frac{n-2}{2}}{t}-E\leq -\frac{E}{2} \,\,\, \ \ \text{for all} \ \ t>t_2,
\end{eqnarray}
since $|\text{det}\,\pi_{\theta}|\leq||\pi_{\theta}||^{n-1}\leq 1$.\\

\noindent $\bullet$ \textbf{Unstable case:}
\begin{eqnarray}\label{e10'main2}
\frac{1}{t}\log |\text{det}\, D\phi^t|_{E^u(\theta)}|&=&\frac{1}{t}\log |\text{det}\pi^{-1}_{\phi^{t}(\theta),u}|+\frac{1}{t}\log | \text{det}\, Y_{\theta,u}(t)|+\frac{1}{t}\log |\text{det}\,\pi_{\theta}|  \\ \label{e10main2}
&\geq & E+ \frac{(1+c^2)^{-\frac{n-2}{2}}}{t}\geq \frac{E}{2} \,\,\, \ \ \text{for all} \ \ t>t_2,
\end{eqnarray}
since $|\text{det}\,\pi_{\theta}|=\frac{1}{|\text{det}\,\pi^{-1}_{\theta}|}\geq (1+c^2)^{-\frac{n-2}{2}}$. 
Thus, we conclude the proof of corollary.
\end{proof}
\begin{R}
It is worth noting that \emph{Proposition \ref{P1main2}} and \emph{Corollary \ref{C1main2}} holds in any dimension. Therefore, we believe
that these can be used to proof the \emph{Theorem \ref{main2}} in any dimension, in other words, it is still needed to be explored if it can help prove
the conjecture given in the introduction for higher dimensions. 
\end{R}

To make the proof of Theorem \ref{main2}, let us use the Corollary \ref{C1main2} and the following lemma, which was proven in a similar
version by Eberlein at \cite[Lemma 3.12]{Ebe:73}, but we present a proof with weaker hypotheses (see condition (ii) at Lemma \ref{L1main2}).
\begin{Le}\label{L1main2}
Let $f\colon (0,+\infty)\to (0,+\infty)$ a bounded function such that 
\begin{enumerate}
\item[\emph{(i)}] $f(t+s)\leq f(t)f(s)$ for every $s,t$;
\item[\emph{(ii)}] There is $r>0$ such that $f(r)<1$. 
\end{enumerate}
Then, there are constants $C>0$, $\lambda\in (0,1)$ such that $$f(t)\leq C\lambda^t, \ \ \ t>0.$$  
\end{Le}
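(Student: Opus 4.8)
The plan is to run the standard sub-multiplicativity (``Fekete-type'') argument, extracting an exponential decay rate from the single value $r$ furnished by hypothesis (ii). This is the same scheme as in \cite[Lemma 3.12]{Ebe:73}, only now fed with the weaker input (ii).

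First I would fix notation. Since $f$ is bounded, choose $K>0$ with $f(s)\le K$ for all $s>0$, and set $\lambda:=f(r)^{1/r}$. Because $f$ takes values in $(0,+\infty)$ we have $f(r)>0$, and hypothesis (ii) gives $f(r)<1$; hence $\lambda\in(0,1)$ and $\log f(r)<0$ are well defined. Next, given an arbitrary $t>0$, I would perform the Euclidean division of $t$ by $r$ in the form $t=nr+\rho$, where $n\ge 0$ is an integer and $\rho\in(0,r]$ (take $n$ to be the largest nonnegative integer with $nr<t$, so that $\rho:=t-nr$ lies in $(0,r]$). Iterating the sub-multiplicativity inequality (i) exactly $n$ times yields $f(t)\le f(r)^{n}\,f(\rho)$, which holds trivially when $n=0$. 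Bounding the remainder factor by $f(\rho)\le K$, and using $nr=t-\rho\ge t-r$, hence $n\ge t/r-1$, together with $\log f(r)<0$ (which reverses the inequality upon multiplication by $\log f(r)$), gives
\[
f(t)\;\le\;K\,f(r)^{\,t/r-1}\;=\;\frac{K}{f(r)}\,\bigl(f(r)^{1/r}\bigr)^{t}\;=\;C\lambda^{t},
\]
with $C:=K/f(r)>0$ and $\lambda=f(r)^{1/r}\in(0,1)$. Since $t>0$ was arbitrary, this is exactly the asserted estimate.

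There is no serious obstacle here; the only points requiring a little care are that $f$ is defined only on the \emph{open} half-line — so the remainder $\rho$ must be taken in $(0,r]$ rather than in $[0,r)$, to keep every argument of $f$ strictly positive throughout the iteration of (i) — and that one must invoke boundedness of $f$ (rather than monotonicity, which is \emph{not} assumed) to control the single term $f(\rho)$ uniformly in $t$. This is precisely where replacing Eberlein's stronger structural hypothesis by mere boundedness is absorbed: it costs only the harmless multiplicative constant $C=K/f(r)$, while the exponential rate $\lambda$ still comes entirely from the value $f(r)<1$.
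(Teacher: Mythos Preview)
Your proof is correct and follows essentially the same route as the paper: write $t=nr+\rho$, iterate sub-multiplicativity to get $f(t)\le f(r)^n f(\rho)$, bound $f(\rho)$ by the global supremum, and read off $C=K/f(r)$ and $\lambda=f(r)^{1/r}$. Your handling of the remainder $\rho\in(0,r]$ is in fact slightly more careful than the paper's (which writes $m\in(0,r)$ and so does not literally cover the case $t\in r\mathbb{N}$), but the argument is the same.
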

\begin{proof}
Let $k\in \mathbb{N}$, then $f(kr)\leq f(r)^{k}$. Therefore, for each $t>0$, we can write $t=kr+m$ for $k\in \mathbb{N}$, $m\in (0,r)$ and 
we have then 
$$f(t)\leq f(kr)f(m)\leq f(r)^kf(m).$$
Let $F$ be the upper bound of $f$, and let $b=f(r)<1$. Then, we have 
$$f(t)\leq F\cdot b^k=F\cdot b^{-\frac{m}{r}}(b^{\frac{1}{r}})^t\leq F\cdot b^{-1}(b^\frac{1}{r})^{t}.$$
Thus, we take $C=F\cdot b^{-1}$ and $\lambda=b^\frac{1}{r}$.
\end{proof}
We are ready to prove of Theorem \ref{main2}
\begin{proof}[\emph{\textbf{Proof of Theorem \ref{main2}}}]
As $M$ is a surface, then $\left|\text{det}\, D\phi^t|_{E^{s(u)}(\theta)}\right|=\left\|D\phi^t|_{E^{s(u)}(\theta)}\right\|$. By 
Corollary \ref{C1main2} for all $\theta\in SM$ and $t>t_2$
\begin{equation}\label{e12main2}
\left\|D\phi^t|_{E^{s}(\theta)}\right\|\leq e^{-\frac{E}{2}t} \ \ \text{and} \ \ \ \left\|D\phi^{-t}|_{E^{u}(\theta)}\right\|\leq e^{-\frac{E}{2}t}, \ \ \ t>t_2.
\end{equation}

\noindent \textbf{Claim:} 
There is $G>0$ such that
$$\left\|D\phi^t|_{E^{s}(\theta)}\right\|\leq e^G \ \ \text{and} \ \ \left\|D\phi^{-t}|_{E^{u}(\theta)}\right\|\leq e^G, \ \ \ t\in [0,t_2].$$
\begin{proof}[\emph{\textbf{Proof of Claim:}}]
It is sufficiently to  note that, when the manifold has no focal points, then the norm of stable Jacobi fields are decreasing and 
$\left\|U_{\theta,s}\right\|\leq c$. Analogue to unstable case. 

	


\end{proof}

\noindent To concludes the proof of Theorem \ref{main2}, we consider the following two functions:
$$f_{s}(t)=\sup_{\theta\in SM}\left\|D\phi^t|_{E^{s}(\theta)}\right\| \ \ \text{and} \ \ f_{u}(t)=\sup_{\theta\in SM}\left\|D\phi^t|_{E^{u}(\theta)}\right\|.$$
Both of the functions are bounded by the above claim and inequality (\ref{e12main2}). These function are also sub-additive, because each $D\phi^t|_{E^{s(u)}(\theta)}$ are linear operators, \emph{i.e.}, satisfies the item (i) of the Lemma \ref{L1main2}. The inequalities at (\ref{e12main2}) also show that there is $r>0$ such that for all $\theta\in SM$ 
$$\left\|D\phi^r|_{E^{s}(\theta)}\right\|<1 \ \ \text{and} \ \ \left\|D\phi^{-r}|_{E^{u}(\theta)}\right\|<1,$$
which implies that $f_{s(u)}(t)$ satisfies the item (ii) of Lemma \ref{L1main2}. Thus, by Lemma \ref{L1main2} there are $C_{s(u)}>0$, $\lambda_{s(u)}\in (0,1)$ such that $$f_{s(u)}(t)\leq C_{s(u)}\lambda_{s(u)}^t.$$
We taken  $C=\max\{C_s, C_u\}$ and $\lambda=\max\{\lambda_s,\lambda_u\}$ to  conclude that, for $\theta\in SM$

$$\left\|D\phi^t|_{E^{s}(\theta)}\right\|\leq C\lambda^t \ \ \text{and} \ \ \left\|D\phi^{-t}|_{E^{u}(\theta)}\right\|\leq C\lambda^t, \ \ \ t\geq 0.$$
The last inequalities allows us to state that the subspaces  $E^{s(u)}(\theta)$ are linearly independent. Moreover, since $M$ has no focal points, then  $E^{s(u)}(\theta)$ are continuous in $\theta$ (cf. \cite{Ebe:73}). Thus, we  concludes that the geodesic flow is Anosov. 
\end{proof}
\section{Examples of Anosov Geodesic Flows on Non-compact Surfaces}\label{Sec_Examples}
In this section, we use the Theorem \ref{main2} to construct a family of non-compact (non-compactly homogeneous) surfaces, all diffeomorphic to the cylinder $\re \times \mathbb{S}^1$, whose geodesic flow is Anosov (see Section \ref{Examples}). A key tool to build this family of surfaces is the ``\emph{Warped product}", which will be described in the following subsection. 
\subsection{Warped Products}
Let $M,N$ be Riemannian manifolds, with metrics $g_M$ and $g_N$, respectively. Let  $f  >  0$ be a smooth function on $M$. The  \emph{warped product} of $M$ and $N$, $S  =   M \times_{f}  N$, 
is the product manifold  $M  \times  N$  furnished with the Riemannian  metric 
$$    g = \pi_M^{*}( g_M) + (f \circ \pi_M)^2  \pi_N^{*} (g_N),                             $$
where    $\pi_M$  and  $\pi_N$  are the projections  of  $M \times  N$  onto $M$ and  $N$, respectively.

Let $X$ be  a vector field on $M$. The  horizontal lift of $X$ to $M\times_f N$ is the vector field $\overline{X}$ such that $d{\pi_M}_{(p,q)}(\overline{X}(p,q)) = X(p)$ and $d{\pi_N}_{(p,q)}(\overline{X}(p,q)) = 0$. If $Y$ is a vector field on $N$, the vertical lift of $Y$ to $M\times_{f} N$ is the vector field $\overline{Y}$ such that $d{\pi_M}_{(p,q)}(\overline{Y}(p,q)) = 0$ and $d{\pi_N}_{(p,q)}(\overline{Y}(p,q)) = Y(q)$. The set of all such lifts are denoted, as usual, by $\mathcal{L}(M)$ and $\mathcal{L}(N)$, respectively.

We denote by  $\nabla$, $\nabla^M$ and $\nabla^{N}$ the Levi-Civita connections on $M \times_f N$, $M$ and $N$, respectively.\\
The following proposition describes the relationship between the above connections.

\begin{Pro}\label{conexao}\emph{{\cite {Neil:83}}}
	On  $S  =  M  \times_f  N$,  if  $\overline{X}, \overline{Y} \in \mathcal{L}(M)$ and  $\overline{U}, \overline{V} \in \mathcal{L}(N)$ then
	\begin{enumerate}
		\item[\emph{1.}]  $\nabla_{\overline{X}}\overline{Y} = \overline{\nabla_X^M Y},   $\\
		\item[\emph{2.}]  $\nabla_{\overline{U}}\overline{X} = \nabla_{\overline{X}}\overline{U}  =   \left( {\dfrac{Xf}{f}}\right) \overline{U},$\\
		\item[\emph{3.}] $d\pi_M( \nabla_{\overline{U}}\overline{V})= -(g(U,V)/f) \cdot {\rm grad}\, f$,\\
		\item[\emph{4.}] $d\pi_{N}( \nabla_{\overline{U}}\overline{V} ) = \nabla_{U}^{N} V.$
		\item[\emph{5.}] If $\overline{X}$ and $\overline{U}$ are unit vectors then $K(\overline{X}, \overline{U}) = -(1/f){\rm{Hess}}_{M}f(X, X)$, where $K$ denotes the sectional curvature of the plane spanned by $\overline{X}$ and $\overline{U}$.
	\end{enumerate}
\end{Pro}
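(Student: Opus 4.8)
The plan is to deduce all five statements from Koszul's formula for the Levi-Civita connection $\nabla$ of the warped metric $g$, combined with three elementary facts about lifts, valid for $\overline X,\overline Y\in\mathcal L(M)$ and $\overline U,\overline V\in\mathcal L(N)$: (a) $[\overline X,\overline Y]=\overline{[X,Y]}$, $[\overline U,\overline V]=\overline{[U,V]}$ and $[\overline X,\overline U]=0$; (b) $g(\overline X,\overline U)=0$, $g(\overline X,\overline Y)=g_M(X,Y)\circ\pi_M$ and $g(\overline U,\overline V)=(f\circ\pi_M)^2\,g_N(U,V)\circ\pi_N$; (c) $\overline U(h\circ\pi_M)=0$ and $\overline X(h\circ\pi_M)=(Xh)\circ\pi_M$ for every $h\in C^\infty(M)$, so that only the horizontal derivatives of $f$ survive. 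Throughout I will test the identity
$$2g(\nabla_A B,C)=A\,g(B,C)+B\,g(A,C)-C\,g(A,B)+g([A,B],C)-g([A,C],B)-g([B,C],A)$$
against horizontal and vertical lifts separately, using that most terms vanish by (a), (b), (c).

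\emph{Items 1 and 2.} Testing $\nabla_{\overline X}\overline Y$ against a horizontal lift $\overline Z$, every term of Koszul's identity coincides with the corresponding term for $(M,g_M)$, so $g(\nabla_{\overline X}\overline Y,\overline Z)=g_M(\nabla_X^M Y,Z)\circ\pi_M$; testing against a vertical lift $\overline W$, all six terms vanish, so $\nabla_{\overline X}\overline Y$ is horizontal and equals $\overline{\nabla_X^M Y}$. For item 2, testing $\nabla_{\overline X}\overline U$ against a horizontal $\overline Z$ kills all six terms, so $\nabla_{\overline X}\overline U$ is vertical, while testing against a vertical $\overline W$ leaves only $\overline X\,g(\overline U,\overline W)=2\tfrac{Xf}{f}\,g(\overline U,\overline W)$, giving $\nabla_{\overline X}\overline U=\tfrac{Xf}{f}\,\overline U$; the equality $\nabla_{\overline U}\overline X=\nabla_{\overline X}\overline U$ then follows from $[\overline X,\overline U]=0$ and torsion-freeness.

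\emph{Items 3 and 4.} Split $\nabla_{\overline U}\overline V$ into its $d\pi_M$- and $d\pi_N$-parts. Testing against a horizontal $\overline Z$, Koszul's identity collapses to $2g(\nabla_{\overline U}\overline V,\overline Z)=-\overline Z\,g(\overline U,\overline V)=-2f\,(Zf)\,g_N(U,V)\circ\pi_N$; since $g_M({\rm grad}\,f,Z)=Zf$ this is exactly the statement that the horizontal part of $\nabla_{\overline U}\overline V$ equals $-\big(g(\overline U,\overline V)/f\big)\,\overline{{\rm grad}\,f}$, which is item 3. Testing against a vertical $\overline W$, the factor $(f\circ\pi_M)^2$ comes out of all six terms and what remains is precisely twice Koszul's formula for $\nabla_U^N V$ on $(N,g_N)$, so $d\pi_N(\nabla_{\overline U}\overline V)=\nabla_U^N V$, which is item 4.

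\emph{Item 5 and the main obstacle.} Fix a point and compute $K(\overline X,\overline U)=\frac{g(R(\overline X,\overline U)\overline U,\overline X)}{g(\overline X,\overline X)\,g(\overline U,\overline U)}$, extending $\overline X$ as a horizontal lift and $\overline U$ as a vertical lift; the point to keep in mind is that $\overline U$ is then \emph{not} a unit field, since $g(\overline U,\overline U)=(f\circ\pi_M)^2\,g_N(U,U)\circ\pi_N$. As $[\overline X,\overline U]=0$, one has $R(\overline X,\overline U)\overline U=\nabla_{\overline X}\nabla_{\overline U}\overline U-\nabla_{\overline U}\nabla_{\overline X}\overline U$. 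Using items 3 and 4 one writes $\nabla_{\overline U}\overline U=-\big(g(\overline U,\overline U)/f\big)\,\overline{{\rm grad}\,f}+\overline{\nabla_U^N U}$; then item 1 gives $\nabla_{\overline X}\overline{{\rm grad}\,f}=\overline{\nabla_X^M{\rm grad}\,f}$, item 2 handles the vertical piece, and applying the Leibniz rule the term obtained by differentiating $(f\circ\pi_M)^2$ inside $g(\overline U,\overline U)$ cancels the term obtained by differentiating $1/f$, and the two vertical contributions cancel as well, leaving $R(\overline X,\overline U)\overline U=-\big(g(\overline U,\overline U)/f\big)\,\overline{\nabla_X^M{\rm grad}\,f}$. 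Pairing with $\overline X$, using $g_M(\nabla_X^M{\rm grad}\,f,X)={\rm Hess}_M f(X,X)$, and dividing by $g(\overline X,\overline X)\,g(\overline U,\overline U)$ with $X$ a unit vector, the factor $g(\overline U,\overline U)$ cancels and one is left with $K(\overline X,\overline U)=-\tfrac1f\,{\rm Hess}_M f(X,X)$. Items 1--4 are routine once (a), (b), (c) are established; the delicate part is this last computation, where one must resist simplifying $g(\overline U,\overline U)$ to $1$ before differentiating --- it is precisely that hidden factor $(f\circ\pi_M)^2$ whose horizontal derivative cancels the otherwise spurious $(Xf)^2/f^2$ term.
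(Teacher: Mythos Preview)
Your proof is correct. The paper itself does not prove this proposition at all: it is stated with a citation to O'Neill's \emph{Semi-Riemannian Geometry} and used as a black box. Your argument via Koszul's formula and the bracket/orthogonality facts (a)--(c) about lifts is exactly the standard derivation found in that reference, and the curvature computation in item~5 is carried out correctly, including the cancellation of the $\overline{{\rm grad}\,f}$ and $\overline{\nabla_U^N U}$ terms between $\nabla_{\overline X}\nabla_{\overline U}\overline U$ and $\nabla_{\overline U}\nabla_{\overline X}\overline U$.

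One minor remark on wording: when you say ``the term obtained by differentiating $(f\circ\pi_M)^2$ inside $g(\overline U,\overline U)$ cancels the term obtained by differentiating $1/f$'', that is not literally what happens (those two contributions combine to $(Xf)\,g_N(U,U)$, not zero); the actual cancellation is between the resulting $\overline{{\rm grad}\,f}$ term in $\nabla_{\overline X}\nabla_{\overline U}\overline U$ and the identical one appearing in $\nabla_{\overline U}\nabla_{\overline X}\overline U$. The computation you carry out is nonetheless right, and the final formula follows.
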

\subsection{Family of Non-Compact Surfaces with Anosov Geodesic Flow}\label{Examples}
Finally, in this section, let us to construct a surface (diffeomorphic to cylinder) of non-positive curvature with  Anosov geodesic flow (see Subsection \ref{example1}). Furthermore, using a similar arguments of Example 1, we construct, using the Warped Product and the Corollary \ref{Cor3}, a surface of negative curvature whose geodesic flow is not Anosov (see Subsection \ref{example2}).
\subsection{Example 1}\label{example1}
Consider the warped product $M = \mathbb{R} \times_{f} \mathbb{S}^1$, where $f(x) = e^{g(x)}$ and $g(x)$ is a smooth function such that
\begin{itemize}
	\item[{(A)}] $g''(x) + (g'(x))^2 \geq 0$, for any $x$;
	\item[{(B)}] $g''+ (g')^2$ is a periodic function with period $T > 0$;
	\item[{(C)}] There are positive constants $C_1$ and $C_2$ such that $ C_1/2 < g' < C_2/2 $. 
\end{itemize} 

Find functions $g$ that satisfies the above three conditions is very easy, for example, consider the one-parameter family  of function 
$g(x) = ax - \cos x + \sin x$, for $a>0$ {large enough}.

Observe that from Proposition \ref{conexao} and condition (A), the curvature in the point $(x,y)$ of the surface $M$ is given by 
\begin{equation}\label{e0exam1}
K(x,y) = K(x)= -f''/f = -(g''(x) + (g'(x))^2) \leq 0.
\end{equation}
 In particular, from the condition (B), the function $K$ is periodic with period $T$ and $M$ has no focal points, since the curvature is non-positive.  
Throughout the remainder of this section we show that $M$ satisfies the equation (\ref{e1main2}). In this direction, let us understand the geodesics in $M$, looking at the  local coordinates.

Consider a geodesic $\gamma(t) = (x(t), z(t))$ in $M$ with $|\gamma'(t)| = 1$ and a parametrization 
 $\varphi_{t_0}: \mathbb{R} \times (t_0, t_0 + 2 \pi) \to M$
  where $\varphi_{t_0}(x,y) = (x, \cos y, \sin y)$ and $$\varphi_{t_0}(\mathbb{R} \times (t_0, t_0 + 2 \pi) ) \cap \gamma(\mathbb{R}) \neq \varnothing.$$ 
  Let $(x(t), y(t))$ be the local expression of $\gamma(t)$. Consider $X_1 = \displaystyle\frac{\partial}{\partial x}$ and $X_2 = \displaystyle\frac{\partial}{\partial y}$, by the Proposition \ref{conexao} we have
$$  \nabla_ {X_1} X_1 = 0, \, \nabla_ {X_1} X_2 =  \nabla_ {X_2} X_1 = g' X_2, \, {\rm and} \, \nabla_ {X_2} X_2 = - e^{2g}g'X_1.                                                 $$
The Christoffel symbols are given by
$$ \Gamma_{11}^1 = \Gamma_{11}^2 = \Gamma_{12}^1 = \Gamma_{21}^1 = \Gamma_{22}^2 =0, \, \Gamma_{12}^2 = \Gamma_{21}^2 = g' \, {\rm and} \, \Gamma_{22}^1 = -e^{2g}g'.$$

Since $\gamma$ is a geodesic with  $|\gamma'(t)| = 1$ the functions $x(t)$ and $y(t)$ satisfy the following equalities,
\begin{itemize}
\item $x''(t) - e^{2g(x(t))}g'(x(t))(y'(t))^2 = 0$,
\item $y''(t) + 2g'(x(t))x'(t)y'(t) = 0$,
\item $(x'(t))^2 + e^{2g(x(t))}(y'(t))^2 = 1$.
\end{itemize}
Observe that 
\begin{equation}\label{geodesic}
x''(t) = g'(x(t))(1 - (x'(t))^2)
\end{equation} 
 and $|x'(t)| \leq 1$ for every $t \in \mathbb{R}$,  since  this equality does not depend of the parametrization.
 
If there exists $a \in \mathbb{R}$ such that $|x'(a)| = 1$, then $z(a) = 0$. It follows, by the uniqueness of the geodesics,  that $\gamma(t) = (x(t), z(t)) = ( x(a) + t- a, y(a))$ or $\gamma(t) = (x(t)  , z(t)) = ( x(0)+ a - t, y(0))$ .

Now assume that  $|x'(t)| <1$ for every $t \in \mathbb{R}$. Set $b(t) = x'(t)$, from (\ref{geodesic}) we have
\begin{equation}\label{principal}
       \displaystyle\frac{b'(t)}{1 - (b(t))^2}           = g'(x(t)).                                   
\end{equation}
Thus, 
$$   \displaystyle\frac{1}{2}\Big( \log \Big(\displaystyle\frac{1 + b(t)}{1 - b(t)}  \Big)            \Big)'         = g'(x(t)).                                   $$                                 Integrating, we get
$$        \log \Big(\displaystyle\frac{1 + b(t)}{1 - b(t)}  \Big) -  \log \Big(\displaystyle\frac{1 + b(0)}{1 - b(0)}  \Big)    = 2 \displaystyle\int_{0}^{t} g'(x(s))  \, ds.                                          $$        
Hence, the condition (C) for $g$ provides 
$$ B_0 e^{C_{1}t} <   \displaystyle\frac{1+ b(t)}{1 - b(t)} <  B_0e^{C_{2}t}, $$
where $B_0 = \frac{1+b(0)}{1-b(0)}>0$, since $|x'(t)|<1$.
This implies
\begin{equation}\label{des}
1 - \displaystyle\frac{2}{B_0e^{C_{1}t}+1} < b(t) < 1 - \displaystyle\frac{2}{B_0e^{C_{2}t}+1}.
\end{equation}

\noindent Using the above 
equalities and inequalities we will study the below expression
$$         \displaystyle\frac{1}{t}\displaystyle\int_{0}^{t} K(\gamma(s)) \ ds   =  \displaystyle\frac{1}{t}\displaystyle\int_{0}^{t} K(x(s)) \ ds.$$

\noindent In this direction, we will divide the analysis in some cases, regarding the position  of $b(0)$ in $[-1,1]$.
\ \\
\noindent \textbf{Case 1:}\, \, $b(0) = x'(0) = 1$.
	
	In this case $x(t) = x(0) + t$. Hence,
	\begin{eqnarray*}
		\displaystyle\frac{1}{t}\displaystyle\int_{0}^{t} K(x(s)) \ ds &=& \displaystyle\frac{1}{t}\displaystyle\int_{0}^{t} K( x(0) + s) \ ds\\
		&=&  \displaystyle\frac{1}{t}\displaystyle\int_{x(0)}^{x(0) +t} K( u) \ du.
	\end{eqnarray*} 
	Take $ t > 2T$, where $T$ is the period of the function $g$. We can to write $t = n_t T + a$ where $n_t$ is a positive integer number and $0 \leq a < T$. Set $\eta := \displaystyle\int_{0}^{T} K(s) ds <0$, observe that
	\begin{eqnarray*}
		\displaystyle\frac{1}{t}\displaystyle\int_{0}^{t} K(x(s)) \ ds &=& \displaystyle\frac{1}{t}\displaystyle\int_{x(0)}^{x(0) +t} K( u) \ du\\
		&=& \frac{1}{t}\displaystyle\sum_{i=1}^{n_t} \displaystyle\int_{x(0) + (i-1)T}^{x(0) + iT} K(u) du + \frac{1}{t}\displaystyle\int_{x(0) + n_tT}^{x(0) + t} K(u) du \\
		&\leq& \displaystyle\frac{\eta n_t}{t}\\
		&=& \displaystyle\frac{\eta}{T} - \displaystyle\frac{a\eta}{tT} \\
		&\leq& \displaystyle\frac{\eta}{2T}.
	\end{eqnarray*}   

\noindent \textbf{Case 2:}\, \, $b(0) = x'(0) = -1$.\\
Proceeding in the same way, we get      
$$     \displaystyle\frac{1}{t}\displaystyle\int_{0}^{t} K(x(s)) \ ds \leq  \displaystyle\frac{\eta}{2T},                                              $$       
for $t > 2T$.\\
\ \\
\noindent \textbf{Case 3:}\, \, $1/2 \leq b(0) = x'(0) < 1$.\\
Observe that by (\ref{principal}) and (\ref{des}), $b'(t) > 0$ and $b(t) < 1$. In particular, $b(t)$ is a strictly increasing function and $1/2 < b(t) < 1$ for every $t > 0$. Hence, $x(t)$ is an increasing function.\\
Consider the change of variable $u = x(s)$. We have
\begin{eqnarray*}
	\displaystyle\frac{1}{t}\displaystyle\int_{0}^{t} K(x(s)) \ ds &=& \displaystyle\frac{1}{t}\displaystyle\int_{x(0)}^{x(t)} \displaystyle\frac{K(u)}{x'(x^{-1}(u))} \ du\\
	& \leq& \displaystyle\frac{1}{t}\displaystyle\int_{x(0)}^{x(t)} K(u) \ du.\\
\end{eqnarray*}
Take $ t > 4T$ and write $t/2 = n_t T + a$ where $n_t$ is a positive integer number and $0 \leq a < T$. Since $x'(t) > 1/2$ for $t > 0$, it follows that $x(t) > x(0) + 1/2 \cdot t$ for $t >0$. Hence,

\begin{eqnarray*}
	\displaystyle\frac{1}{t}\displaystyle\int_{0}^{t} K(x(s)) \ ds &\leq& 
	\displaystyle\frac{1}{t}\displaystyle\int_{x(0)}^{x(t)} K(u) \ du\\
	&=& \displaystyle\frac{1}{t}\displaystyle\int_{x(0)}^{x(0) + t/2} K(u) \ du + \displaystyle\frac{1}{t}\displaystyle\int_{x(0) + t/2}^{x(t)} K(u) \ du \\
	&\leq&  \displaystyle\frac{1}{t}\displaystyle\int_{x(0)}^{x(0) + t/2} K(u) \ du \\
	&=&  \displaystyle\frac{1}{t}\displaystyle\int_{x(0)}^{x(0) + n_tT} K(u) \ du + \displaystyle\frac{1}{t}\displaystyle\int_{x(0)+ n_tT}^{x(0) + t/2} K(u) \ du \\
	& \leq & \displaystyle\frac{1}{t}\displaystyle\int_{x(0)}^{x(0) + n_tT} K(u) \ du. \\
\end{eqnarray*}
Since $K$ is a periodic function with period $T$, it follows that
\begin{eqnarray*}
	\displaystyle\frac{1}{t}\displaystyle\int_{0}^{t} K(x(s)) \ ds &\leq&  \displaystyle\frac{1}{t} \displaystyle\sum_{i=1}^{n_t} \displaystyle\int_{x(0) + (i-1)T}^{x(0) + iT} K(u) \ du\\
	&=& \displaystyle\frac{n_t\eta}{t}\\
	&=& \displaystyle\frac{\eta}{2T} - \displaystyle\frac{a\eta}{tT}\\
	& \leq& \displaystyle\frac{\eta}{4T}.
\end{eqnarray*}

\noindent \textbf{Case 4:}\, \, $-1/2 \leq b(0) = x'(0) < 1/2$.\\
Now consider $-1/2 \leq b(0) = x'(0) < 1/2$. By (\ref{des}), $\displaystyle\lim_{t \to + \infty} b(t) = \displaystyle\lim_{t \to + \infty} x'(t) = 1$, note that there is a unique $T_1 > 0$ such that $b(T_1) = 1/2$ because the function $b(t)$ is strictly increasing. Hence, by (\ref{des})
 
 $$       1 - \displaystyle\frac{2}{B_0e^{C_{1}T_1}+1}  < \displaystyle\frac{1}{2},                                   $$
\noindent which implies, $T_1 < \textcolor{red}\ds\dfrac{1}{C_1} \log \Big(\dfrac{3}{B_0}\Big) < \ds\frac{2}{C_1} \log 3 $, since $B_0>\dfrac{1}{3}$. \\
Take $ t > \max \Big \{\displaystyle\frac{2}{C_1} \log 3 + 4T, \displaystyle\frac{4}{C_1} \log 3 \Big \}$, we have
 
 $$    \displaystyle\frac{1}{t}\displaystyle\int_{0}^{t} K(x(s)) \ ds = \displaystyle\frac{1}{t}\displaystyle\int_{0}^{T_1} K(x(s)) \ ds + \displaystyle\frac{1}{t}\displaystyle\int_{T_1}^{t} K(x(s)) \ ds    \leq   \displaystyle\frac{1}{t}\displaystyle\int_{T_1}^{t} K(x(s)) \ ds.                                       $$
{Now consider the geodesic  $\beta (t) = \gamma(t+T_1)$ and apply the inequality in the Case 3, we have}
 $$\displaystyle\int_{T_1}^{t} K(x(s +T_1)) \ ds  =  \displaystyle\int_{0}^{t-T_1} K(x(s)) \ ds   \leq \displaystyle\frac{\eta}{4T}(t - T_1).                                          $$
 Hence,
 \begin{eqnarray*}
 	\displaystyle\frac{1}{t}\displaystyle\int_{0}^{t} K(x(s)) \ ds & \leq& \displaystyle\frac{1}{t}\displaystyle\int_{T_1}^{t} K(x(s)) \ ds\\
 	& \leq& \displaystyle\frac{\eta}{4T}\Big(1 - \displaystyle\frac{T_1}{t} \Big) \\
 	& \leq & \displaystyle\frac{\eta}{8T}.
 \end{eqnarray*}
\textbf{Case 5:}  $-1 < b(0) = x'(0) < -1/2$.\\
 By (\ref{des}), $\displaystyle\lim_{t \to + \infty} b(t) = \displaystyle\lim_{t \to + \infty} x'(t) = 1$, note that there is a unique $T_2 > 0$ such that $b(T_2) = -1/2$ because the function $b(t)$ is strictly increasing. Using again the inequality (\ref{des})

$$     -\displaystyle\frac{1}{2} < 1 -  \displaystyle\frac{2}{B_0e^{C_{2}T_2}+ 1}                                    $$
which implies, 
\begin{equation}\label{ine}
T_2 > \displaystyle\dfrac{1}{C_2} \log \Big(\displaystyle\frac{1}{3B_0} \Big).
\end{equation}
Note that $B_0 \to 0$ when ${b(0)} \to -1$. {In particular, $T_2 \to + \infty$ as $b(0) \to -1$ }. So, let us start first suppose that $T_2 \leq 4T$. In this case, take  $ t > 4T + \max \Big\{\displaystyle\frac{2}{C_1} \log 3 + 4T, \displaystyle\frac{4}{C_1} \log 3 \Big\}$. We have
  $$    \displaystyle\frac{1}{t}\displaystyle\int_{0}^{t} K(x(s)) \ ds = \displaystyle\frac{1}{t}\displaystyle\int_{0}^{T_2} K(x(s)) \ ds + \displaystyle\frac{1}{t}\displaystyle\int_{T_2}^{t} K(x(s)) \ ds    \leq   \displaystyle\frac{1}{t}\displaystyle\int_{T_2}^{t} K(x(s)) \ ds.                                       $$
   Observe that $t - T_2 > \max \Big\{\displaystyle\frac{2}{C_1} \log 3 + 4T, \displaystyle\frac{4}{C_1} \log 3 \Big\}$. {Now consider the 
   geodesic\break  $\beta (t) = \gamma(t+T_2)$ and apply the inequality in the Case 4, we have}
  
 $$  \displaystyle\int_{T_2}^{t} K(x(s)) \ ds   \leq \displaystyle\frac{\eta}{8T}(t - T_2).                                          $$
Hence,
\begin{eqnarray*}
	\displaystyle\frac{1}{t}\displaystyle\int_{0}^{t} K(x(s)) \ ds & \leq& \displaystyle\frac{1}{t}\displaystyle\int_{T_2}^{t} K(x(s)) \ ds\\
	& \leq& \displaystyle\frac{\eta}{8T}\Big(1 - \displaystyle\frac{T_2}{t} \Big).
\end{eqnarray*}  
  Observe that $t > {8}T \geq 2T_2$ thus $T_2/t < 1/2$. Therefore,
  $$    \displaystyle\frac{1}{t}\displaystyle\int_{0}^{t} K(x(s)) \ ds < \displaystyle\frac{\eta}{16T}.$$                                           
  
  Now suppose that $T_2 > 4T$. For $4T < t \leq T_2$ we have that $-1 < x'(t)  \leq -1/2$. In particular, $x(t) \leq x(0) - 1/2 \cdot t$ for $4T < t \leq T_2$. Hence,
 \begin{eqnarray*}
	\displaystyle\frac{1}{t}\displaystyle\int_{0}^{t} K(x(s)) \ ds &=& \displaystyle\frac{1}{t}\displaystyle\int_{x(0)}^{x(t)} \displaystyle\frac{K(u)}{x'(x^{-1}(u))} \ du\\
	&=& \displaystyle\frac{1}{t}\displaystyle\int_{x(t)}^{x(0)} -\displaystyle\frac{K(u)}{x'(x^{-1}(u))} \ du\\
	& \leq& \displaystyle\frac{1}{t}\displaystyle\int_{x(t)}^{x(0)} K(u) \ du.\\
	&=& \displaystyle\frac{1}{t}\displaystyle\int_{x(t)}^{x(0) -t/2} K(u) \ du  + \displaystyle\frac{1}{t}\displaystyle\int_{x(0) -t/2}^{x(0)} K(u) \ du.\\
	& \leq& \displaystyle\frac{1}{t}\displaystyle\int_{x(0) -t/2}^{x(0)} K(u) \ du.	
\end{eqnarray*}  
   We can to write $t/2 = n_t T + a$ where $n_t$ is a positive integer number and $0 \leq a < T$. Hence,
 \begin{eqnarray*}
 \displaystyle\frac{1}{t}\displaystyle\int_{0}^{t} K(x(s)) \ ds &\leq& \displaystyle\frac{1}{t}\displaystyle\int_{x(0) -t/2}^{x(0)} K(u) \ du\\
 & \leq&
  \displaystyle\frac{1}{t} \displaystyle\sum_{i=1}^{n_t} \displaystyle\int_{x(0) -i T}^{x(0) - (i-1)T} K(u) \ du + \displaystyle\frac{1}{t}\displaystyle\int_{x(0) -t/2}^{x(0) - n_tT} K(u) \ du\\	
  & \leq & \displaystyle\frac{1}{t} \displaystyle\sum_{i=1}^{n_t} \displaystyle\int_{x(0) -i T}^{x(0) - (i-1)T} K(u) \ du\\
  &=& \displaystyle\frac{\eta n_t}{t}\\
  &=& \displaystyle\frac{\eta}{2T} - \displaystyle\frac{a\eta}{tT}\\
  & <& \displaystyle\frac{\eta}{4T},
 \end{eqnarray*} 
because $ t > 4T$. If $T_2 < t \leq T_2 +  \max \Big\{\displaystyle\frac{2}{C_1} \log 3 + 4T, \displaystyle\frac{4}{C_1} \log 3 \Big\}$ we have
$$ \displaystyle\frac{1}{t}\displaystyle\int_{0}^{t} K(x(s)) \ ds \leq  \displaystyle\frac{1}{t}\displaystyle\int_{0}^{T_2} K(x(s)) \ ds < \displaystyle\frac{\eta T_2}{4Tt}.$$
Set $A =\displaystyle\frac{2}{C_1} \log 3$. Since $T_2 > 4T$ we have

$$   \displaystyle\frac{T_2 + 2A + 4T}{T_2} < \displaystyle\frac{4T + 4T + 2A}{4T},                                                    $$
which provides 
$$   \displaystyle\frac{T_2}{t} \geq  \displaystyle\frac{T_2}{T_2 + \max  \Big\{\displaystyle\frac{2}{C_1} \log 3 + 4T, \displaystyle\frac{4}{C_1} \log 3  \Big \}}  \geq \displaystyle\frac{T_2}{T_2 + 2A + 4T} > \displaystyle\frac{4T}{8T+2A}.                                   $$
Therefore,

$$    \displaystyle\frac{1}{t}\displaystyle\int_{0}^{t} K(x(s)) \ ds < \displaystyle\frac{\eta}{8T + 2A}.                                 $$
If $t > T_2 +  \max \Big\{\displaystyle\frac{2}{C_1} \log 3 + 4T, \displaystyle\frac{4}{C_1} \log 3 \Big\}$,  {consider the geodesic  $\beta (t) = \gamma(t+T_2)$ and apply again  the inequality in the Case 4, we have} 
\begin{eqnarray*}
	\displaystyle\int_{0}^{t} K(x(s)) \ ds &=& 	\displaystyle\int_{0}^{T_2} K(x(s)) \ ds + 	\displaystyle\int_{T_2}^{t} K(x(s)) \ ds \\
	&<&  \displaystyle\frac{\eta T_2}{4T} + \displaystyle\frac{\eta(t-T_2)}{8T}\\
	&=&  \displaystyle\frac{\eta t}{8T} + \displaystyle\frac{\eta T_2}{8T}\\  
	&<& \displaystyle\frac{\eta t}{8T}.
\end{eqnarray*}
Thus,
$$        \displaystyle\frac{1}{t}\displaystyle\int_{0}^{t} K(x(s)) \ ds <      \displaystyle\frac{\eta}{8T}.                                       $$
Therefore, we prove that if $ t > 4T +  \max \Big\{\displaystyle\frac{2}{C_1} \log 3 + 4T, \displaystyle\frac{4}{C_1} \log 3 \Big\}$ then
$$\displaystyle\frac{1}{t}\displaystyle\int_{0}^{t} K(x(s)) \ ds < \max \Big\{  \displaystyle\frac{\eta}{16T},  \displaystyle\frac{\eta}{8T + 2A}    \Big\}<0,$$
for any geodesic $\gamma(t) = (x(t), z(t))$. Therefore by  Theorem \ref{main2}, follows that the geodesic flow of $M = \mathbb{R} \times_{f} \mathbb{S}^1$ is Anosov.

\begin{R} For the family  $g_{a}(x)=ax-\cos x + \sin x$, we have the new family
$$h_{a}(x)=g_{a}''(x)+(g_{a}'(x))^2= 1+a^2  +2a(\sin x +\cos x)+2\sin x \cos x.$$
The functions $h_{a}$ are periodic with period $2\pi$. Moreover,  for $a$ large enough, we have $h_{a}(x)>0$. Thus, in the interval $[0,2\pi]$ the function $h_{a}$ is positive and has a minimum. So, by continuity of the family function $h_{a}$ in the parameter $a$, there exists the  parameter 
$$\overline{a}=\inf \{a\in \re^{+}: h_{a}|_{[0,2\pi]}\geq 0 \ \ \text{and its minimum value is} \ 0 \}.$$
In conclusion, the function $h_{\overline{a}}(x)$ is non-negative and by periodicity, it has a infinite many zeros.\\ Therefore, by \emph{(\ref{e0exam1})}  the surface $M=\re \times_{f_{\overline{a}}}\mathbb{S}$, where $f(x)=e^{g_{\overline{a}}(x)}$ has non-positive curvature with infinite many points of zero curvature and whose geodesic flow is Anosov. Moreover, as the Anosov condition is open, we make a local perturbation of the metric in a point of zero curvature, so that we get positive curvature in a small open set and the new geodesic flow is Anosov.  


\end{R}
\subsection{Example 2}\label{example2}
Finally, in this section we use the warped product to construct a non-compact surface with negative curvature and whose geodesic flow 
is not Anosov. In fact, consider the surface $M = \mathbb{R} \times_{f} \mathbb{S}^1$, where $f(x) = e^{g(x)}$ and $g(x) = e^{-x}$. Observe that the curvature in the point $(x,y)$ is given by 
$$k(x,y) = k(x)= -f''/f = -(g''(x) + (g'(x))^2) = -(e^{-x}+e^{-2x})<0.$$ 

Now consider a ray $\gamma:[0, + \infty) \to M$, where $\gamma(t) = (t, y_0)$ with $y_0 \in \mathbb{S}^1$. Observe that, 

$$ \lim_{t\to +\infty}  \displaystyle\frac{1}{t}\displaystyle\int_{0}^{t} k(\gamma(s)) =0.$$

By Corollary \ref{Cor3'} we have that the geodesic flow of $M$ is not Anosov.\\

{\textbf{Acknowledgments:}} The authors would like to thank to Davi Máximo for useful conversations during the preparation of this work.


\bibliographystyle{alpha}	
\bibliography{Neces_Condition}

\noindent \textbf{\'Italo Dowell Lira Melo}\\
Universidade Federal do Piauí \\ 
Departamento de Matemática-UFPI, Ininga, cep 64049-550 \\
Piau\'i-Brasil \\
E-mail: italodowell@ufpi.edu.br\\
\ \\
\noindent \textbf{Sergio Augusto Roma\~na Ibarra}\\
Universidade Federal do Rio de Janeiro\\
Av. Athos da Silveira Ramos 149, Centro de Tecnologia \ - Bloco C \ - Cidade Universit\'aria Ilha do Fund\~ao, cep 21941-909 \\
Rio de Janeiro-Brasil\\
E-mail: sergiori@im.ufrj.br
\end{document}